\newtheorem{thm}{Theorem}
\newtheorem{cor}[thm]{Corollary}
\newtheorem{lem}[thm]{Lemma}
\theoremstyle{definition}
\newtheorem{rem}[thm]{Remark}
\numberwithin{equation}{section}
\def\Z{\mathbb Z}
\def\R{\mathbb R}
\def\PB{\overline{{B}}}
\def\PP{\overline{{P}}}
\def\PBgr{\overline{\mathcal{B}}}
\def\Jgr{{\mathcal{J}}}
\def\PBgrness{\overline{\mathcal{N}}}
\begin{document}

\title{A presentation for the planar pure braid group}

\author[J. Mostovoy]{Jacob Mostovoy}

\address{Departamento de Matem\'aticas, CINVESTAV-IPN\\ Col. San Pedro Zacatenco, M\'exico, D.F., C.P.\ 07360\\ Mexico}
\email{jacob@math.cinvestav.mx}

\begin{abstract}
We describe the construction of a minimal presentation for the group of planar pure braids $\PP_n$ on $n$ strands. The generators of this presentation are dual to the generators of the cohomology ring of $\PP_n$ found by Y.~Baryshnikov while the relations lie in the commutator subgroup of $\PP_n$. We also construct an explicit (although non-minimal) presentation for the pure cactus group.
\end{abstract}

\subjclass[2010]{14N20, 20F05}


\maketitle

\section{Introduction}

\subsection{Planar braids}
The group of planar braids on $n$ strands $\PB_n$, also called the \emph{twin group} \cite{Kh2} or the group of \emph{flat briads} \cite{Merk2}, has a presentation with the generators by $\sigma_1, \ldots, \sigma_{n-1}$ and the relations
\[
\begin{array}{rcll}
\sigma_i^2&=& 1&\quad \text{for all\ } 1\leq i < n;\\
\sigma_i\sigma_j&=& \sigma_j\sigma_i &\quad \text{for all\ } 1\leq i, j <n \text{\ with\  } |i-j|>1.
\end{array}
\]
There is a homomorphism of $\PB_n$ onto the symmetric group $S_n$ which sends the generator $\sigma_i$ to the transposition $\tau_i = (i\  i+1)$. The kernel of this homomorphism is the \emph{planar pure braid group} $\PP_n$.  It is the fundamental group of the 
configuration space of $n$ ordered particles in $\R$ no three of which are allowed to coincide. It has been studied by many authors; see, for instance, \cite{BSV, B, BW, GLR, Kh2, HK, Merk2, MRM, NNS, V}.

The planar pure braid groups on 3, 4 or 5 strands are free on 1, 7 and 31 generators respectively, while  the planar pure braid group on 6 strands is a free product of 71 copies of the infinite cyclic group and 20 copies of the free abelian group on 2 generators, see \cite{MRM}. In the present note, we give an algorithm for producing a presentation for $\PP_n$ with arbitrary $n$ with the minimal number of generators and relations.  The cohomology ring of $\PP_n$  was computed by Baryshnikov in \cite{B}, see also \cite{DT}. Our set of generators is dual to the basis in $H^1(\PP_n,\Z)$ constructed by Baryshnikov, while the relations correspond to his basis of $H^2(\PP_n,\Z)$.

We identify the set of strands of a braid in $\PP_n$ with $\{1,\ldots, n\}$, numbering the strands from left to right; the braids are assumed to be ``descending''. A permutation $\alpha$ of the set $\{1,\ldots, n\}$ will be usually encoded by the sequence of numbers $(\alpha^{-1}(1), \ldots, \alpha^{-1}(n))$.

\subsection{Crossing types}
A \emph{crossing type on $n$ strands} $I=(I_1, I_2, I_3)$ is an ordered partition 
$$\{1,\ldots, n\}=I_1\sqcup I_2\sqcup I_3$$ 
with $|I_2|=2$.
Similarly, a \emph{double crossing type on $n$ strands} $J=(J_1, J_2, J_3, J_4, J_5)$    is an ordered partition 
$$\{1,\ldots, n\}=J_1\sqcup J_2\sqcup J_3\sqcup J_4\sqcup J_5$$ 
with $|J_2| = |J_4| =2$. A double crossing type $J$
gives rise to a pair of crossing types 
$$J_{[1]}:=(J_1, J_2, J_3\sqcup J_4\sqcup J_5)\quad \text{and}\quad J_{[2]}:=(J_1\sqcup J_2\sqcup J_3,  J_4, J_5).$$ 
A crossing type $I$ is \emph{essential} if there exists $x\in I_3$ with $x>y$ for each $y\in I_2$. Similarly, a double crossing type $J$ is essential if (a) if there exists $x_1\in J_3$ with $x_1>y$ for each $y\in J_2$ and (b) if there exists $x_2\in J_5$ with $x_2>y$ for each $y\in J_4$. 

\begin{figure}[ht]
\includegraphics[height=1in]{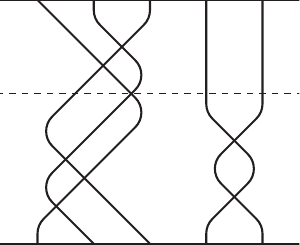}
\caption{A  crossing of a planar pure braid with the crossing type $(\{3\},\{1,2\},\{4,5\})$}
\end{figure}
The term ``crossing type'' has the following explanation. Consider a planar pure braid whose all crossing points have different heights. Then, each crossing of such a braid determines a crossing type $I$: the sets $I_1$ and $I_3$ consist of the strands that pass to the left and to the right of the crossing, respectively, and $I_2$ consists of the strands involved in the crossing. 
A double crossing type $J$ describes,  in a similar fashion, the combinatorics of a pair of crossings that lie on the same level; the two crossing types $J_{[1]}$ and $J_{[2]}$ that it produces appear as the crossing types of a small perturbation of $J$ that separates the heights of the two crossing points. In the terminology of \cite{B}, crossing types and double crossing types are particular cases of \emph{tidy posets}.

\subsection{A presentation for $\PP_n$}

A fixed crossing type $I$ determines a  one-dimensional cohomology class of $\PP_n$, namely, the function that assigns to a braid 
the number of its crossings of type $I$. The crossings should be counted with appropriate signs: a crossing is positive if the strand with the smaller index ``overtakes'' the strand with the larger index as the parameter along the braid increases, and negative otherwise.
The integral cohomology ring of $\PP_n$ was computed by Baryshnikov \cite{B, DT}; in degrees one and two his result\footnote{The Betti numbers of $\PP_n$ had been computed before Baryshnikov by Bjorner and Welker in \cite{BW}.} can be stated as follows:

\begin{thm} The abelian group $H^1(\PP_n,\Z)$ is freely generated by the essential crossing types on $n$ strands and $H^2(\PP_n,\Z)$ -- by the essential double crossing types on $n$ strands.
\end{thm}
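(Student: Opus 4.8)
The plan is to compute the cohomology of $\PP_n$ via the known structure of the configuration space $M_n$ of $n$ ordered points in $\R$ with no three coinciding. Since $\PP_n = \pi_1(M_n)$ and $M_n$ is aspherical (it is a hyperplane-complement-like space: the complement in $\R^n$ of the codimension-two subspaces $x_i = x_j = x_k$), we have $H^*(\PP_n,\Z) \cong H^*(M_n,\Z)$, and it suffices to understand $H^1$ and $H^2$ of this space. First I would recall Baryshnikov's description (or the Goresky--MacPherson / Bj\"orner--Welker approach) expressing $H^*(M_n)$ through the combinatorics of the intersection poset of the arrangement $\{x_i = x_j = x_k\}$; the ``tidy posets'' of \cite{B} index a basis, and crossing types and double crossing types are exactly the tidy posets contributing in degrees $1$ and $2$. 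So the heart of the argument is: (i) identify the degree-$1$ and degree-$2$ pieces of Baryshnikov's basis with (essential) crossing types and double crossing types respectively, and (ii) check that the counting cocycles described in the text (number of signed crossings of a given type) are the dual basis of $H_1$, resp. generate $H^2$.

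The concrete route I would take for $H^1$: a planar pure braid, presented as a ``descending'' diagram with crossings at distinct heights, determines for each crossing a crossing type $I$, and the signed count $c_I$ of crossings of type $I$ is manifestly a homomorphism $\PP_n \to \Z$, hence an element of $H^1(\PP_n,\Z)=\mathrm{Hom}(\PP_n,\Z)$. I would show these $c_I$ span by exhibiting, for each \emph{essential} crossing type, an explicit loop (a small braid that performs a single crossing of that type and nothing else) on which $c_I$ evaluates to $1$ and all other $c_{I'}$ to $0$; the essentiality condition ($\exists x\in I_3$ above all of $I_2$) is precisely what guarantees such a ``local'' braid exists as a genuine planar braid (the strand in $I_3$ lying to the right allows the two strands of $I_2$ to be exchanged and returned). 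Non-essential crossing types must be shown to be redundant — their counting functions are either identically zero on $\PP_n$ or expressible via the essential ones — and a rank count against the Betti number $\dim H^1$ of Bj\"orner--Welker \cite{BW} closes the argument.

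For $H^2$ the plan is to use the CW or Salvetti-type model for $M_n$ (or directly Baryshnikov's chain complex on tidy posets): the $2$-cells correspond to pairs of crossings at the same height, i.e.\ to double crossing types $J$, and the boundary of such a $2$-cell records the two ways of perturbing $J$ into a braid with crossings at distinct heights, namely through $J_{[1]}$ and $J_{[2]}$. I would then (a) write the cellular cochain complex $C^1 \to C^2$ with $C^1$ spanned by crossing types, $C^2$ by double crossing types, (b) show $H^2 = C^2/\mathrm{im}$ is free with basis the essential double crossing types — again the essentiality conditions (a),(b) pick out exactly those $J$ whose cell is not killed and whose class is independent — and (c) match the rank with the degree-$2$ Betti number from \cite{BW, B}. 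Throughout, the sign conventions (positive if the smaller-index strand overtakes the larger) must be handled carefully so that the coboundary formula $\delta(c_I)$ expands correctly in terms of the $J$ with $J_{[1]}=I$ or $J_{[2]}=I$.

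The main obstacle I anticipate is step (b)/(c) for $H^2$: proving that the essential double crossing types are not merely a spanning set modulo the image of $\delta$ but an actual \emph{basis}, i.e.\ that there are no further relations. This amounts to a careful combinatorial analysis of the map $C^1 \to C^2$ and its cokernel — one must show the non-essential double crossing types, together with $\mathrm{im}(\delta^1)$, account for exactly the complement of the essential ones, with no collisions. Here I would lean on Baryshnikov's tidy-poset computation as the organizing framework: his result already yields the ranks, so the remaining work is to verify that the explicitly geometric basis (counting cocycles dualized to essential types) maps isomorphically onto his basis. A secondary, more routine obstacle is the bookkeeping of signs and of which permutations/strand-orders occur, since ``descending'' braids and the left-to-right numbering impose constraints that make some a priori-plausible crossing types non-realizable; the essentiality hypothesis is exactly the clean combinatorial shadow of those constraints, and spelling that out carefully is where the proof earns its keep.
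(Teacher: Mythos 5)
First, note that the paper does not prove this theorem: it is Baryshnikov's computation of the cohomology of $\PP_n$, quoted from \cite{B} (see also \cite{DT}), with the Betti numbers going back to \cite{BW}. The paper only uses the statement to argue that its presentation has the minimal number of generators and relations. So there is no in-paper proof to compare against, and your proposal has to stand on its own as a reconstruction of Baryshnikov's argument.

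As such a reconstruction it is a reasonable outline, but several steps you treat as routine are genuine gaps. (1) You assert that the no-three-equal space $M_n$ is aspherical because it is ``hyperplane-complement-like''; asphericity is false as a general principle for subspace arrangement complements and for this particular arrangement it is a nontrivial theorem in its own right (due to Khovanov). You actually need it: $H^1(\pi_1X)\cong H^1(X)$ holds for any path-connected $X$, but in degree two Hopf's theorem only gives an injection $H^2(\pi_1X)\hookrightarrow H^2(X)$, so without asphericity the $H^2$ statement does not transfer from the space to the group. (2) Your $H^2$ argument is circular as written: you propose to ``lean on Baryshnikov's tidy-poset computation'' to show that the essential double crossing types form a basis of the cokernel, but that computation is exactly the theorem to be proven. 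A self-contained proof would have to produce the cochain complex (Goresky--MacPherson, Bj\"orner--Welker, or a discrete Morse model) and actually compute. (3) The rank identities --- that the number of essential crossing types equals $b_1$ and the number of essential double crossing types equals $b_2$ from \cite{BW} --- are asserted, not verified; these are genuine combinatorial counts that carry part of the weight of the theorem. (4) A smaller point that would derail the execution of your $H^1$ step: there is no planar pure braid that ``performs a single crossing of type $I$ and nothing else,'' since in a pure braid every pair of strands crosses an even number of times. The dual braids $b_I$ with $c_I(b_{I'})=\delta_{I,I'}$ do exist, but they have exactly one \emph{essential} crossing accompanied by many non-essential ones, and constructing them requires the spanning-tree machinery of Remark~\ref{explicit} and Section~\ref{ba}, not a local picture near one crossing.
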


In this note, we construct a presentation for $\PP_n$ with one generator for each essential crossing type $I$ on $n$ strands and one relation for each essential double crossing type $J$ on $n$ strands:
 
\begin{thm}\label{presentation}
The group $\PP_n$ has a presentation whose generators are the essential crossing types on $n$ strands, with one relation $r_J$ for each essential double crossing type $J$ on $n$ strands. For every such $J$ there exist words $u_J, v_J$ which do not contain $J_{[1]}$ and $J_{[2]}$ and
such that the relation $r_J$ is of the form
$$[u_J\, J_{[1]}\, v_J^{-1}, J_{[2]}]=1.$$
\end{thm}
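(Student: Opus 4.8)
The plan is to derive the presentation by the Reidemeister--Schreier process applied to the defining presentation of $\PB_n$ recalled in the introduction, via the homomorphism $\PB_n\to S_n$, and then to simplify the output by Tietze transformations driven by the relators $\sigma_i^2=1$. It is convenient to note first that $\PB_n$ is the right-angled Coxeter group on the path graph with vertices $\sigma_1,\dots,\sigma_{n-1}$, so its defining relators split into the ``square'' relators $\sigma_i^2$ and the ``commutation'' relators $[\sigma_i,\sigma_j]$ with $|i-j|>1$; these two families will be responsible, respectively, for the elimination of generators and for the surviving relations. I would fix a Schreier transversal $T=\{\bar\alpha:\alpha\in S_n\}$ for $\PP_n$ in $\PB_n$ consisting of descending positive braids --- one reduced positive word per permutation, chosen by a greedy rule so that $T$ is closed under taking prefixes. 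The Reidemeister--Schreier generators are then the elements $\gamma_{\alpha,i}:=\bar\alpha\,\sigma_i\,\overline{\alpha\tau_i}^{-1}$, trivial exactly when $\bar\alpha\sigma_i\in T$; to each nontrivial one I would attach the crossing type $I(\alpha,i)$ recording which strands of $\bar\alpha$ pass to the left of, through, and to the right of the $i$-th crossing. The basic geometric fact to check is that $\gamma_{\alpha,i}$ is the planar pure braid that performs a single crossing of type $I(\alpha,i)$ and brings the strands back, so that it represents a lift of the cohomology class attached to $I(\alpha,i)$ in Baryshnikov's theorem.

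Next I would rewrite the conjugates $\bar\alpha\,\sigma_i^2\,\bar\alpha^{-1}$ of the square relators. Reading letters from left to right, such a relator rewrites to $\gamma_{\alpha,i}\,\gamma_{\alpha\tau_i,i}=1$, which either identifies $\gamma_{\alpha,i}$ with $\gamma_{\alpha\tau_i,i}^{-1}$ or, when one of the two is already trivial, kills the other. Using these as Tietze transformations, performed in a well-chosen order, I would eliminate the redundant generators, and the point to verify is that after all such eliminations the surviving generators are precisely the essential crossing types on $n$ strands --- the non-essential ones, together with half of the remainder, get absorbed. That nothing further can collapse is forced by Baryshnikov's computation of $H^1(\PP_n,\Z)$: abelianizing the partially simplified presentation already reproduces the correct first Betti number, so the count is tight.

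Finally I would rewrite the conjugates $\bar\alpha\,[\sigma_i,\sigma_j]\,\bar\alpha^{-1}$ of the commutation relators, $|i-j|>1$. Since $\sigma_i$ and $\sigma_j$ act on disjoint pairs of strands, the Schreier rewriting of such a relator is, after applying the substitutions from the previous step, a commutator $[w_1,w_2]=1$, where $w_1$ reduces to the generator carried by the crossing at positions $\{i,i+1\}$ of $\bar\alpha$ and $w_2$ to the one at positions $\{j,j+1\}$. The data $(\alpha;i,j)$ together with the descent information at $i$ and $j$ packages into a double crossing type $J$, with $w_1,w_2$ becoming conjugates of $J_{[1]}$ and $J_{[2]}$; the conjugating words, assembled from the transversal elements $\bar\alpha$, $\overline{\alpha\tau_i}$, $\overline{\alpha\tau_j}$, do not involve the letters $J_{[1]}$ and $J_{[2]}$, which produces the words $u_J,v_J$ of the statement. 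I would then check that such a relator is nontrivial exactly when $J$ is essential, so that the surviving relations are indexed by the essential double crossing types, matching Baryshnikov's $H^2(\PP_n,\Z)$; combined with the generator count, this makes the presentation minimal in the number of both generators and relations, via the usual homological lower bounds.

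The step I expect to be the main obstacle is this last one: controlling how the two-letter commutation relator unwinds when $\bar\alpha$ is long, reconciling the five blocks $J_1,\dots,J_5$ of a double crossing type with the normal-form and descent data, and simultaneously verifying the essentiality criterion and the property that $u_J$ and $v_J$ avoid $J_{[1]}$ and $J_{[2]}$. A secondary difficulty is choosing the transversal so that the eliminations coming from the square relators are transparent rather than merely numerically correct; an induction on $n$ that removes the largest strand should help to organize the bookkeeping as well as the inductive verification of the essentiality conditions.
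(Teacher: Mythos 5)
Your overall strategy --- Reidemeister--Schreier for $\PP_n\le\PB_n$ with a descending Schreier transversal, square relators eliminating generators, commutation relators becoming the surviving relations --- is the same as the paper's (the paper phrases it via Higgins's groupoid version, which specializes to the classical coset method you describe). But there are genuine gaps at the two places where the real work happens. First, the square relators alone cannot cut the generating set down to the essential crossing types. After using the transversal and the relators $\sigma_i^2$, you are left with roughly one generator per non-tree edge of the permutohedron: for $n=4$ that is $36-23=13$ unordered edges, while there are only $7$ essential crossing types. The further collapse requires two facts you do not supply: (i) every \emph{non-essential} Schreier generator is trivial, not just those on the spanning tree (the paper proves this geometrically: a braid with no essential crossings is the trivial pure braid, Lemma~\ref{nocross}); and (ii) all generators carrying the same crossing type are equal, which is forced by the \emph{commutation} relators combined with (i) (Lemma~\ref{ithree} and the rewriting algorithm). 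Your appeal to the rank of $H^1(\PP_n,\Z)$ only certifies that the final count is consistent; it cannot substitute for exhibiting the relations that perform the collapse, and those relations are consumed in the process, which directly affects whether one relation per essential double crossing type survives at the end.

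Second, the step you flag as ``the main obstacle'' is indeed where the theorem lives, and it is not addressed. The rewritten commutation relators are indexed by pairs $(\alpha;i,j)$, vastly more than the essential double crossing types; one must show that all relators inducing the same double crossing type $J$ are consequences of a single chosen one (the paper does this by an induction on $|J_3|$), and that the relators with $J$ non-essential hold identically. Moreover, your justification that $u_J,v_J$ avoid $J_{[1]}$ and $J_{[2]}$ is based on a misidentification of where these words come from: the transversal elements $\bar\alpha$, $\overline{\alpha\tau_i}$ rewrite to \emph{trivial} words, and the nontrivial conjugators arise instead from re-expressing the various Schreier generators of a fixed crossing type in terms of the single chosen (decreasing) representative --- these are words in the other essential generators produced by the rewriting algorithm. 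Showing they omit $J_{[1]}$ and $J_{[2]}$ requires tracking exactly which crossing types can occur in such a conjugator (the paper's Lemma~\ref{w}); it is not automatic.
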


The words $u_J$ and $v_J$ are constructed in terms of spanning trees for the 1-skeleta of the permutohedra. Although we do not have a closed form for these words in general, we describe an algorithm that produces them (see Section~\ref{algo}). In the case of at most 5 strands the essential crossing types form a set of free generators for $\PP_n$. When $n=6$, the presentation of this theorem can be simplified so that the relations become commutators of generators. 
In general, finding a closed presentation for $\PP_n$ is not expected to be easy \cite{NNS}.

It will be clear from the construction that Baryshnikov's generators for $H^1(\PP_n,\Z)$ are dual to those of Theorem~\ref{presentation}. As the number of generators is bounded below by the rank of $H^1(\PP_n,\Z)$, we see that our presentation has the smallest possible number of generators. The number of relations is also minimal since it is bounded  below by the rank of $H^2(\PP_n,\Z)$ (see, for instance, \cite[Section~5.4.1]{FM}). 

Our main tool is the Reidemeister-Schreier method. This method has an \emph{ad hoc} component which is finding the right choice for the coset representatives. In the case of the planar pure braid groups, the problem of finding suitable representatives  may be challenging already for $n=5$, see \cite{BSV}. Our choice of representatives (which, in our approach, translates into the choice of a spanning tree of a certain groupoid) is informed by Baryshnikov's work.

Another approach to the presentation of $\PP_n$ has been pointed out by D.~Farley \cite{F} after the first version of the present paper appeared on arxiv. Namely, the groups $\PP_n$ are diagram groups in the sense of \cite{GS1}, and it follows that one can also arrive to a minimal presentation of $\PP_n$ by applying the results of Guba and Sapir \cite[Theorem~6.6]{GS2}. It seems, however, that this approach does not offer substantial computational advantages over the one presented here.

One advantage of using the Reidemeister-Schreier method as applied in the present note is that it can be used to obtain explicit (although not minimal) simple presentations for other pure braid-like groups. We illustrate this on the example of the pure cactus group $\Gamma_n$. The problem of finding an explicit presentation for this group has been raised in the literature  \cite{Gen}, and the Reidemeister-Schreier method has been applied in this context before; see the computations for $n=4,5$ in \cite{BCL}. Here, the real problem that the Reidemeister-Schreier method does not solve is finding a presentation with the minimal number of generators. 

\section{The Reidemeister-Schreier presentations}\label{RSp}

\subsection{The Reidemeister-Schreier method via groupoids}\label{rs}
We will use the Reidemeister-Schreier method as formulated by Higgins \cite[Theorem~7]{Higgins}.

Let $\mathcal{C}=\langle X\,|\, R\rangle$ be a connected groupoid presented in terms of generators and relations and $T$ be a set of words in $X$ such that the corresponding morphisms in $\mathcal{C}$ form a spanning tree for $\mathcal{C}$. Here we think of a groupoid as a directed graph with loops and multiple edges; a spanning tree of the groupoid is a spanning tree for this graph (where we disregard the directions of the edges).

\begin{thm}\label{Higgins}
The group of automorphisms of an object in $\mathcal{C}$ has a presentation $\langle X\,|\, R \cup T\rangle$.
\end{thm}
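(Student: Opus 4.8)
The plan is to exhibit an explicit isomorphism between the vertex group $G:=\mathrm{Aut}_{\mathcal{C}}(v_0)$ of a chosen object $v_0$ and the group $H$ defined by the presentation $\langle X\mid R\cup T\rangle$, whose generators are the generating morphisms $X$ regarded as group letters (forgetting their sources and targets) and whose relators are those in $R$ together with one relator $t$ for each tree morphism $t\in T$. The geometric principle behind the statement is that a connected groupoid is equivalent to any of its vertex groups, the equivalence being realized by collapsing a spanning tree; at the level of presentations, collapsing the tree is exactly the imposition of the relations $T$. I would first make this precise by recording, for each object $v$, the unique reduced tree morphism $\tau_v\colon v_0\to v$ determined by $T$ (with $\tau_{v_0}=\mathrm{id}$), each $\tau_v$ being a word in $X^{\pm1}$.

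The first homomorphism is the retraction. Viewing $G$ as a one-object groupoid, define $\rho\colon\mathcal{C}\to G$ by $\rho(f)=\tau_b^{-1}\,f\,\tau_a$ for a morphism $f\colon a\to b$. A one-line check that $\rho(gf)=\rho(g)\rho(f)$ and $\rho(\mathrm{id}_v)=\mathrm{id}$ shows $\rho$ is a functor, so it automatically carries every relation of $\mathcal{C}$, in particular every relator in $R$, to the identity of $G$. For a tree edge $t\colon a\to b$ one has $\tau_b=t\,\tau_a$ (or $\tau_a=t^{-1}\tau_b$), whence $\rho(t)=\mathrm{id}$ as well. Thus the assignment $x\mapsto\rho(x)$ on generators kills all relators of $R\cup T$ and descends to a homomorphism $\Phi\colon H\to G$.

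For the reverse map I would send a class $g\in G=\mathcal{C}(v_0,v_0)$, represented by a loop word $w$ in $X^{\pm1}$, to its image $[w]$ in $H$. The crux of the argument, and the step I expect to be the main obstacle, is the well-definedness of this $\Psi\colon G\to H$: two loop words represent the same morphism of $\mathcal{C}=F(X)/\langle\langle R\rangle\rangle$ precisely when they are joined by a finite chain of elementary equivalences, namely free cancellations $xx^{-1}\leftrightarrow\mathrm{id}$ and insertions of relators from $R$, and I must check each move fixes the class in $H$. Free cancellations are harmless; an insertion that turns a loop $w=\alpha\,\beta$ into $w'=\alpha\,s\,\beta$ with $s$ a relator of $R$ changes $[w]$ by the conjugate $[\alpha]\,[s]\,[\alpha]^{-1}$, which is trivial in $H$ because $H$ is the quotient by the \emph{normal} closure of $R$. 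This is exactly where the passage from a groupoid congruence to a group presentation is used, and it is the heart of the Reidemeister--Schreier argument. Granting it, $\Psi$ is a homomorphism, since composition in $G$ corresponds to concatenation of loop words.

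It then remains to see the two maps are mutually inverse. For $g\in G$ represented by a loop $w$, we have $\Phi\Psi(g)=\rho(w)=g$, because $\rho$ restricts to the identity on $\mathcal{C}(v_0,v_0)$ (as $\tau_{v_0}=\mathrm{id}$). On a generator $x\colon a\to b$ one has $\Phi(x)=\tau_b^{-1}\,x\,\tau_a$, so $\Psi\Phi(x)=[\tau_b^{-1}\,x\,\tau_a]$; since each $\tau_v$ is a product of tree morphisms, which are trivial in $H$ by the relations $T$, this equals $[x]=x$. Hence $\Phi$ and $\Psi$ are inverse isomorphisms and $G\cong H$, as claimed. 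I would close by noting the same conclusion is visible topologically: the presentation $\langle X\mid R\rangle$ is realized by a connected $2$-complex whose fundamental groupoid is $\mathcal{C}$, and computing $\pi_1$ at $v_0$ by collapsing the spanning tree $T$ produces precisely $\langle X\mid R\cup T\rangle$.
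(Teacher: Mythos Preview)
Your argument is correct. Note, however, that the paper does not itself prove this theorem: it is quoted from Higgins and stated without proof. What the paper does record is the explicit homomorphism $\langle X\mid R\cup T\rangle\to\mathrm{Aut}(a)$, sending a generator $x$ with underlying morphism $m_x\in\mathrm{Hom}(a_1,a_2)$ to $t(a_1)\cdot m_x\cdot t^{-1}(a_2)$; up to the composition convention this is exactly your retraction $\Phi$. Your construction of the inverse $\Psi$, the well-definedness check via elementary moves in the free groupoid, and the verification that $\Phi$ and $\Psi$ are mutually inverse together supply a complete self-contained proof where the paper simply invokes the reference. The topological coda (collapse a spanning tree in the presentation $2$-complex) is the standard picture behind Higgins's result and is consistent with the argument you give.
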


For $a\in\mathrm{Ob}(\mathcal{C})$, an explicit isomorphism 
$$\langle X\,|\, R\cup T\rangle \to \mathrm{Aut}(a)$$ 
can be constructed as follows. 
For each object $a'$, let $t(a')\in \mathrm{Hom} (a, a')$ be the morphism obtained by following the chosen spanning tree from $a$ to $a'$. (If an edge $m$ in the path from $a$ to $a'$ has the orientation opposite to the direction of the path, we replace it with $m^{-1}$). Assume that the morphism $m_x$ in $\mathcal{C}$ which corresponds to $x\in X$ lies in $\mathrm{Hom} (a_1, a_2)$. Then, $x\in X$ gives rise to the element 
\begin{equation}\label{braidgen}
t(a_1)\cdot m_x \cdot t^{-1}(a_2)\in \mathrm{Aut}(a).
\end{equation}

The usual Reidemeister-Schreier method for constructing a presentation of a subgroup $H$ in a group $G$ with a known presentation becomes a particular case of this result if one considers the action groupoid of $G$ on the set of cosets $G/H$.

\subsection{The groupoid of planar braids}
Consider the groupoid $\PBgr_n$ whose objects are the permutations of the set $\{1,\ldots, n\}$ and the morphisms are the planar braids whose strands are labelled by the numbers from 1 to $n$. Such a braid defines two permutations of $\{1,\ldots, n\}$; namely, the order of the labels at the top and at the bottom of the braid. These two permutations are the source and the target of the morphism defined by the braid.

Two braids with labelled strands are composable if and only if the labels of the strands match at the corresponding endpoints; their composition, if defined, is the product of the braids. 
The automorphism group of any permutation is isomorphic to the planar pure braid group. 

\begin{figure}[ht]
\includegraphics[height=0.8in]{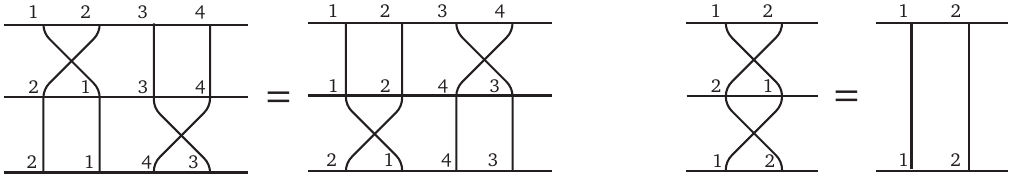}
\caption{Relations in the groupoid of planar braids}
\end{figure}

The groupoid $\PBgr_n$ has a set of generators $[\alpha,k]\in\mathrm{Hom} (\alpha, \alpha\tau_k),$ where $\alpha$ is a permutation of $\{1,\ldots, n\}$, $k<n$ is a positive integer and the underlying braid of $[\alpha,k]$ is $\sigma_k$. The relations are the same as in the planar braid group, with the addition of the labelling, namely:

\begin{itemize} 
\item $ [\alpha,k] [\alpha \tau_k, k] =1 $;
\item $[\alpha, k_1] [\alpha \tau_{k_1}, k_2] = [\alpha, k_2]  [\alpha \tau_{k_2}, k_1]$,
where  $k_1+1< k_2<n$.
\end{itemize}
The generator $[\alpha,k]$ defines the crossing type 
$$(I_1^\alpha, I_2^\alpha, I_3^\alpha) = (\alpha^{-1}\{1,\ldots, k-1\}, \alpha^{-1}\{k, k+1\}, \alpha^{-1}\{k+2,\ldots, n\}).$$
The value of $\alpha$ defines the total order on each of the $I_r^\alpha$. 
Note that $[\alpha, k]$ is specified uniquely by the crossing type $(I_1^\alpha, I_2^\alpha, I_3^\alpha)$ together with a total order on each of the $I_r^\alpha$.  

We say that $[\alpha,k]$ is essential if $(I_1^\alpha, I_2^\alpha, I_3^\alpha)$ is; otherwise we call it non-essential. 

\subsection{Braids without essential crossings}

The groupoid $\PBgr_n$ has a distinguished subgroupoid $\PBgrness_n$ which consists of the planar braids that can be drawn without essential crossings; that is, without crossings whose crossing type is essential.
\begin{lem}\label{nocross}
For any pair of permutations $\alpha$ and $\beta$, the set $\mathrm{Hom}_{\PBgrness_n}(\alpha,\beta)$ consists of one element.
\end{lem}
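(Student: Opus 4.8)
The plan is to prove this in two halves: first that $\mathrm{Hom}_{\PBgrness_n}(\alpha,\beta)$ is nonempty, and then that it contains at most one element. For nonemptiness, I would argue that any permutation $\alpha$ can be transformed into any other permutation $\beta$ using only non-essential crossings. The key observation is that a crossing $[\gamma,k]$ is non-essential precisely when no element of $I_3^\gamma = \gamma^{-1}\{k+2,\dots,n\}$ exceeds both elements of $I_2^\gamma = \gamma^{-1}\{k,k+1\}$; in particular, swapping two adjacent strands $a,b$ (with $a$ to the left of $b$ in $\gamma$) via such a crossing is allowed whenever $\max(a,b)$ is at least as large as every label currently sitting to the right. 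A clean way to realize an arbitrary target is to sort: repeatedly move the strand labelled $n$ rightward to the rightmost position using adjacent transpositions — each such crossing involves $n$, which dominates everything, so it is non-essential — then ignore strand $n$ and recurse on $\{1,\dots,n-1\}$. This produces a braid in $\PBgrness_n$ from any $\alpha$ to the identity permutation $(1,2,\dots,n)$, and composing two such (one reversed) gives a morphism $\alpha\to\beta$.

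For uniqueness — which I expect to be the main obstacle — the cleanest route is to exhibit a groupoid homomorphism (or a functor to a discrete object, i.e.\ an invariant) that distinguishes all elements of each hom-set $\mathrm{Hom}_{\PBgrness_n}(\alpha,\beta)$, equivalently to show $\mathrm{Aut}_{\PBgrness_n}(\alpha)$ is trivial. The natural candidate invariant is one of Baryshnikov's cohomology classes: for a fixed essential crossing type $I$, counting signed crossings of type $I$ is a homomorphism $\PP_n \to \Z$, and by Theorem~1 these together separate $\PP_n$ from its... no — they detect $H_1$, not the whole group, so that alone is insufficient. Instead I would argue more structurally: a braid drawn with no essential crossings is, up to the relations of $\PBgr_n$, determined by its endpoints. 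Concretely, I would set up a normal form. Using the two groupoid relations ($[\gamma,k][\gamma\tau_k,k]=1$ and the far-commutativity), show that any non-essential braid from $\alpha$ to $\beta$ can be reduced to the canonical ``sorting'' word described above (move $n$ to the right, then $n-1$, etc., read off from the one-line notation of $\alpha$ and $\beta$): the point is that a crossing involving strand $n$ can always be pushed past crossings not involving $n$, and two crossings both involving $n$ that are adjacent either cancel or commute, so the subword of crossings touching $n$ is rigid; then induct on $n$ for the rest.

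The hard part will be making the ``push strand $n$ around'' reduction rigorous with only the far-commutativity relation available — when strand $n$ must cross past a crossing of two strands one of which is its immediate neighbour, the relation we would want is a Yang–Baxter / braid relation, which is \emph{not} among the defining relations of $\PBgr_n$. I would handle this by checking that such a local configuration is in fact \emph{essential} (the crossing of the two lower-indexed strands has $n$ to its right, hence is essential), so it never occurs inside a non-essential braid; thus the only crossings a non-essential braid can contain involving near-neighbours of $n$ are already constrained, and the reduction goes through using far-commutativity plus cancellation alone. An alternative, perhaps safer, endgame: realize $\PBgrness_n$ as the action groupoid of the trivial group — i.e.\ show directly that the subgroupoid generated by non-essential generators, modulo the $\PBgr_n$-relations, is simply connected — by building an explicit spanning tree (the sorting paths) and verifying every non-essential generator already lies in the tree or is a composite of tree edges, so no loops survive; then uniqueness is immediate and Lemma~\ref{nocross} follows.
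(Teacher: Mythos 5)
Your proposal shares the paper's key observation (any crossing lying strictly to the left of the strand labelled $n$ has $n\in I_3$ and is therefore essential), but both halves of your argument contain concrete errors. First, the existence step sorts in the wrong direction. Once you park strand $n$ at the \emph{rightmost} position, every subsequent crossing among the remaining strands has $n\in I_3$ and is essential --- by your own stated criterion, since $n$ dominates every label and now sits to the right of everything. So the recursion on $\{1,\dots,n-1\}$ is impossible: from $(2,3,1,4)$ you cannot cross strands $3$ and $1$ without creating an essential crossing. The sort must push strand $n$ to the \emph{left}, targeting the descending permutation $(n,n-1,\dots,1)$; this is precisely why the paper roots its descending spanning tree there.

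Second, the uniqueness reduction has a genuine gap. You claim the only obstruction to commuting an $n$-crossing past an adjacent non-$n$-crossing is the configuration in which the latter has $n$ to its right, which is essential and hence excluded. But the mirror configuration, with $n$ immediately to the \emph{left} of the other crossing, is non-essential and really occurs: on $4$ strands, starting from $(4,2,3,1)$, the word that first crosses strands $2,3$ at positions $\{2,3\}$ and then crosses strands $4,3$ at positions $\{1,2\}$ has no essential crossings, yet its two letters are at adjacent positions and cannot be reordered by far-commutativity, and the first letter does not involve strand $n=4$ while the second does. So the subword of crossings touching $n$ is \emph{not} separable, and the proposed normal form (all $n$-crossings collected together) does not exist. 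The "alternative endgame" does not repair this: showing every non-essential generator is a composite of tree edges is automatic in any connected groupoid and does not show that loops die. The paper avoids word rewriting entirely: since no crossings occur to the left of strand $n$, the braid can be drawn in a normal form in which the behaviour of strand $n$ is forced by the endpoints; deleting strand $n$ yields a braid on $n-1$ strands with no essential crossings, and one inducts down to $n=3$, where the claim is immediate.
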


\begin{proof}
If a braid has no essential crossings, there are no crossings at all to the left of the strand with label $n$. Therefore, the braid $b$ can be drawn as follows:
$$
\includegraphics[width=50pt]{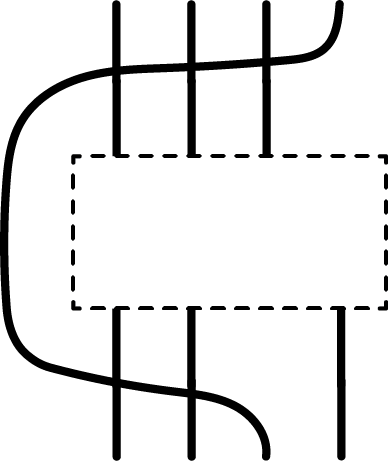}
$$
Removing the strand $n$ we obtain a braid with $n-1$ strands and no essential  crossings. Hence, it is sufficient to verify the Lemma for the planar braids on 3 strands for which it is immediate.
\end{proof}

\subsection{The Reidemeister-Schreier presentation for $\PP_n$}
Theorem~\ref{Higgins} implies the following
\begin{thm}\label{PnO}
The group $\PP_n$ has a presentation with the generators $[\alpha,k]$ and the relations 
\begin{itemize} 
\item[(i)]  $[\alpha,k]=1$ for non-essential $[\alpha,k]$.
\item[(ii)] $ [\alpha,k]^{-1} = [\alpha \tau_k, k]$;
\item[(iii)] $[\alpha, k] [\alpha \tau_{k}, l] = [\alpha, l] [\alpha \tau_{l}, k]$,
where  $k+1< l<n$;
\end{itemize}
\end{thm}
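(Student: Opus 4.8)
The plan is to apply the groupoid form of the Reidemeister--Schreier method, Theorem~\ref{Higgins}, to the groupoid $\PBgr_n$, whose presentation $\langle X\mid R\rangle$ has just been described. Here $X=\{[\alpha,k]\}$ and $R$ is precisely the list of relations (ii) and (iii) of the statement: the groupoid relation $[\alpha,k][\alpha\tau_k,k]=1$ is (ii) rewritten as $[\alpha,k]^{-1}=[\alpha\tau_k,k]$, and the groupoid relation $[\alpha,k_1][\alpha\tau_{k_1},k_2]=[\alpha,k_2][\alpha\tau_{k_2},k_1]$ for $k_1+1<k_2<n$ is (iii) with $(k_1,k_2)=(k,l)$. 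Since $\PBgr_n$ is connected and the automorphism group of each of its objects is $\PP_n$, Theorem~\ref{Higgins} provides, for \emph{any} choice of spanning tree $T$ of $\PBgr_n$, a presentation $\langle X\mid R\cup T\rangle$ of $\PP_n$ in which each $t\in T$ contributes the relation $t=1$.

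The key step is to choose $T$ inside the subgroupoid $\PBgrness_n$, so that it consists of non-essential generators. This is possible. An element of $\PBgrness_n$ has a diagram with no essential crossings; reading such a diagram as a concatenation of single-crossing diagrams shows that it is a product of non-essential generators $[\alpha,k]$, so the non-essential generators generate $\PBgrness_n$. By Lemma~\ref{nocross} the groupoid $\PBgrness_n$ is connected (there is even a unique morphism between any two objects), hence the edges of $\PBgr_n$ labelled by non-essential generators already connect all $n!$ objects, and we may select a spanning tree $T$ among them. With such a $T$, the presentation $\langle X\mid R\cup T\rangle$ has $R$ equal to the relations (ii) and (iii), and $T$ contained in the set of relations (i).

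It remains to see that imposing \emph{all} of the relations (i) — killing every non-essential generator, not only those lying in $T$ — does not change the group; equivalently, that these relations already hold in $\PP_n\cong\langle X\mid R\cup T\rangle$. Fix an object $a$ and use the explicit isomorphism $\langle X\mid R\cup T\rangle\to\mathrm{Aut}(a)$ given by \eqref{braidgen}: a non-essential generator $[\alpha,k]\in\mathrm{Hom}(\alpha,\alpha\tau_k)$ is sent to $t(\alpha)\cdot m_{[\alpha,k]}\cdot t(\alpha\tau_k)^{-1}$. The morphism $m_{[\alpha,k]}$ lies in $\PBgrness_n$ because $[\alpha,k]$ is non-essential, and $t(\alpha),t(\alpha\tau_k)$ lie in $\PBgrness_n$ because $T$ was chosen inside $\PBgrness_n$; hence this product lies in $\mathrm{Aut}_{\PBgrness_n}(a)$, which is trivial by Lemma~\ref{nocross}. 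So $[\alpha,k]=1$ in $\PP_n$ for every non-essential $[\alpha,k]$, and adjoining the relations (i) to $\langle X\mid R\cup T\rangle$ leaves the group unchanged. Since the relation set $\{(i),(ii),(iii)\}$ contains $R\cup T$, this gives the desired presentation.

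I expect the main obstacle to be the second step: verifying that the non-essential generators genuinely connect all of $\PBgr_n$, which rests on reading off from (the proof of) Lemma~\ref{nocross} that the braids of $\PBgrness_n$ are words in non-essential generators. Once that is in place, the rest is bookkeeping with Theorem~\ref{Higgins} and the isomorphism \eqref{braidgen}.
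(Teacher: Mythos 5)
Your proof is correct and follows essentially the same route as the paper: apply Higgins's groupoid Reidemeister--Schreier theorem to $\PBgr_n$ with a spanning tree consisting of non-essential generators, then use Lemma~\ref{nocross} to show via the explicit isomorphism \eqref{braidgen} that every non-essential generator is already trivial in $\PP_n$, so that adjoining all of the relations (i) changes nothing. The only difference is that you obtain such a tree abstractly from the connectivity of $\PBgrness_n$, whereas the paper exhibits one explicitly (the descending spanning tree of the permutohedron) and checks its edges are non-essential --- an explicit choice it reuses later in the rewriting algorithm.
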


\begin{proof} 
By Theorem~\ref{Higgins}, a presentation for $\PP_n$ can be obtained from the presentation of the groupoid of planar braids $\PBgr_n$ by adding the relations that correspond to the edges of a spanning tree for the free groupoid on the generators $[\alpha,k]$ of $\PBgr_n$.

Recall that the permutohedron of order $n$ is a polytope whose vertices are the permutations of $\{1,\ldots, n\}$; two vertices are connected by an edge whenever the corresponding permutations differ by a right multiplication by an elementary transposition. 
The set of all the generators $[\alpha,k]$ can be identified with the set of oriented edges of the permutohedron of order $n$; namely, $[\alpha,k]$ is the edge connecting  $\alpha$ with $\alpha \tau_{k}$. A spanning tree for the 1-skeleton of the permutohedron lifts to a spanning tree for the free groupoid on the $[\alpha,k]$. 

There is a number of known spanning trees for the permutohedron, see \cite[Section~3.1]{HRW}. We consider the spanning tree that we call \emph{descending}. It is a rooted tree whose root is the permutation $(n, n-1, \ldots, 1)$. The parent of any other permutation $\alpha$ is $\alpha\tau_i$, where $i$ is defined as the number such that $\alpha^{-1}$ is decreasing on the set $\{i+1,\ldots, n\}$ and fails to be so on $\{i,\ldots, n\}$. In other words, if a permutation $\alpha$ is considered as a sequence of numbers $(\alpha^{-1}(1), \ldots, \alpha^{-1}(n))$, the edge of the tree towards the root is defined by exchanging the rightmost adjacent two numbers which fail to be in decreasing order.

A generator $[\alpha,k]$ that corresponds to an edge of the descending spanning tree is always non-essential. Indeed, by construction, $\alpha^{-1}$ is a decreasing function on the interval from $k+1$ to $m$. On the other hand, $\alpha^{-1}(k+1)$ lies in $I_2^\alpha$.

As a consequence, the presentation for $\PP_n$ can be obtained from that of $\PBgr_n$, by adding the relations $[\alpha,k]=1$ for those $[\alpha,k]$ which are edges of the descending spanning tree. In fact, we have $[\alpha,k]=1$ for \emph{all} non-essential $[\alpha,k]$. Indeed, the pure braid defined by a non-essential $[\alpha,k]$ as in Section \ref{braidgen} has no essential crossings and is, therefore, trivial by Lemma~\ref{nocross}.
\end{proof}

\section{The presentations in terms of crossing types}

\subsection{Decreasing generators}\label{gen}

Call a generator $[\alpha,k]$ \emph{decreasing} if $\alpha$ decreases on each of the $I_r^\alpha$. Note that the decreasing generators are in one-to-one correspondence with crossing types.

\begin{lem}\label{ithree} Two generators which give rise to the same crossing type with the same order on $I_1$ and on $I_2$ define the same element of $\PP_n$.
\end{lem}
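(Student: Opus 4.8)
The plan is to show that if $[\alpha,k]$ and $[\beta,l]$ give the same crossing type $I=(I_1,I_2,I_3)$ with the same order on $I_1$ and on $I_2$, then after conjugating both into $\mathrm{Aut}(a)$ via the descending spanning tree (as in Section~\ref{rs}) they yield equal elements of $\PP_n$. The two generators differ only in the order they impose on $I_3$; since $\alpha$ and $\beta$ both restrict to the given order on $I_1\sqcup I_2$ but may differ on $I_3$, I would first reduce to the case where $\beta=\alpha\tau_j$ for a single transposition $\tau_j$ swapping two adjacent entries both lying in $I_3$, and then argue by induction on the number of such transpositions needed to pass from $\alpha$ to $\beta$. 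Note $j\geq k+2$, so $j$ and $k$ are non-adjacent indices.

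The key computation is then local. Using relation (iii) of Theorem~\ref{PnO} with the pair $(k,j)$ — legitimate since $k+1<j<n$ — one gets $[\alpha,k]\,[\alpha\tau_k,j]=[\alpha,j]\,[\alpha\tau_j,k]$. Now $[\alpha,j]$ is an edge lying entirely among the strands to the right of the $k$-crossing: its crossing type has $I_2^{\alpha,j}\subseteq I_3$, and since $\alpha^{-1}$ need not be decreasing there one must instead observe directly from the definition that this edge is non-essential precisely when the relevant pair of strands is being swapped ``internally'' to $I_3$ — more carefully, I would choose the reducing transposition $\tau_j$ so that $[\alpha,j]$ is non-essential, hence trivial by relation (i) (equivalently Lemma~\ref{nocross}). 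The same holds for $[\alpha\tau_k,j]$, whose crossing type also has its $I_2$ inside $I_3$. Killing these two trivial generators, relation (iii) collapses to $[\alpha,k]=[\alpha\tau_j,k]=[\beta,k]$ as elements of the groupoid presentation, and conjugating both sides by the same spanning-tree word $t(a_1)$, $t(a_2)$ gives the claimed equality in $\PP_n$. (One should check that $t(\alpha)$ and $t(\alpha\tau_j)$ differ, after this reduction, only by non-essential generators, which follows because the descending spanning tree and the non-essential-generator relations are compatible, as established in the proof of Theorem~\ref{PnO}.)

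The main obstacle is the bookkeeping around which adjacent swaps inside $I_3$ produce non-essential generators and guaranteeing that $\alpha$ can be moved to $\beta$ through a sequence of such swaps while keeping the order on $I_1$ and $I_2$ fixed. Since the order on $I_1\sqcup I_2$ is already fixed and only the linear order on $I_3$ varies, the symmetric group on $I_3$ acts transitively on these orders by adjacent transpositions, and each adjacent transposition of two elements of $I_3$ corresponds to an index $j\geq k+2$; the delicate point is that $[\alpha,j]$ for such $j$ involves only strands indexed $\geq k+2$ and hence — having $I_2^{\alpha,j}\subseteq I_3^\alpha$ and an element of $I_3^\alpha$ to its right exactly when $\tau_j$ is not the topmost swap — may or may not be essential, so I would instead route the induction through the \emph{last} inversion, making $[\alpha,j]$ automatically non-essential by the same argument that handles the descending spanning tree. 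Once that routing is fixed, every step is a direct application of relations (i) and (iii), so the remaining work is routine.
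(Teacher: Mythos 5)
Your proposal is correct and follows essentially the same route as the paper: the paper also derives $[\alpha,k]=[\alpha\tau_j,k]$ from relation (iii) with second index $j$ inside $I_3$ after killing the non-essential generators $[\alpha,j]$ and $[\alpha\tau_k,j]$ via relation (i), and your ``route through the last inversion'' is exactly the paper's descending spanning tree on the permutohedron of orderings of $I_3$. Your worry about conjugating by $t(\alpha)$ is unnecessary, since the identity $[\alpha,k]=[\alpha\tau_j,k]$ is already a consequence of the relations in the presentation of $\PP_n$ itself.
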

In particular, we can always assume that $\alpha$ is decreasing on $I_3^\alpha$.
\begin{proof} 
Fix an essential generator $[\beta, k]$ and consider the set $S$ of all the generators $[\alpha, k]$ such that
$$I_1^\alpha = I_1^\beta,\quad I_2^\alpha=  I_2^\beta,$$
and such that $$\alpha(i)=\beta(i)$$ for all $i\in I_1^\beta \cup I_2^\beta$. In other words, $[\alpha, k]$ defines the same crossing type, with  the same order on $I_1$ and $I_2$, as $[\beta, k]$. 
The set $S$ can be identified with the set of permutations of the set $I_3^\beta$. 

Consider the permutohedron whose vertices are labelled with elements of $S$. Every edge of this permutohedron corresponds to a relation of type (iii) in which the generators $[\alpha, k]$ and $[\alpha\tau_l, k]$ define the same crossing type as $[\beta, k]$, with the same order on $I_1$. Consider the descending spanning tree for the 1-skeleton of this permutohedron. An edge of this tree corresponds to a relation of type (iii) in which $[\alpha, l]$ and 
 $[\alpha\tau_k, l]$ are non-essential; with the help of (i) such relation can be written as 
 $$[\alpha, k] = [\alpha\tau_l, k].$$
Since any two vertices of the permutohedron are connected to each other by a path lying in the spanning tree, we have 
$$[\alpha, k] = [\alpha', k]$$
for all $[\alpha, k], [\alpha', k] \in S$.
\end{proof}

\begin{cor}\label{shtrix}The relations (iii) in the presentation of Theorem~\ref{PnO} can be replaced by the following relations:
\begin{itemize} 
\item[(iii$_1$)] $[\alpha \tau_{k}, l] = [\alpha, k]^{-1} [\alpha, l] [\alpha, k]$;
\item[(iii$_2$)] $[\alpha, k] = [\alpha\tau_l, k]$,
\end{itemize}
where  $k+1< l<n$.
\end{cor}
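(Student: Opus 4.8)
The plan is to show that, modulo the relations (i) and (ii) of Theorem~\ref{PnO}, the family of relations (iii) is equivalent to the family (iii$_1$)~$\cup$~(iii$_2$). Since (iii$_1$) and (iii$_2$) are each obtained by rewriting a single instance of (iii), the real content is the reverse direction: every relation of type (iii) must be derivable from (i), (ii), (iii$_1$) and (iii$_2$). I would fix an instance of (iii), namely $[\alpha,k][\alpha\tau_k,l]=[\alpha,l][\alpha\tau_l,k]$ with $k+1<l<n$, and split into cases according to which of the four generators involved are essential.

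First I would treat the generic case in which all four generators are essential. Using (iii$_2$) one rewrites $[\alpha\tau_l,k]$ as $[\alpha,k]$ (these two generators define the same crossing type with the same order on $I_1$ and $I_2$, which is exactly the content of Lemma~\ref{ithree}; (iii$_2$) is the syntactic incarnation of this). Then the original relation (iii) becomes $[\alpha,k][\alpha\tau_k,l]=[\alpha,l][\alpha,k]$, which is precisely a conjugation identity, and rearranging gives $[\alpha\tau_k,l]=[\alpha,k]^{-1}[\alpha,l][\alpha,k]$, i.e. (iii$_1$). Conversely, starting from (iii$_1$) and (iii$_2$) one reconstructs (iii) by the same two moves run backwards, using (ii) to handle the inverse $[\alpha,k]^{-1}=[\alpha\tau_k,k]$ if one prefers to state everything in terms of generators rather than their inverses. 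So in the all-essential case the two systems are interderivable over (i),(ii).

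Next I would dispose of the cases where one or more of the generators is non-essential. If $[\alpha,k]$ is non-essential, then so is $[\alpha\tau_k,k]$, and one should check that $[\alpha,l]$ and $[\alpha\tau_k,l]$ then define the same crossing type, so that (iii) collapses, via (i), to an instance of (iii$_2$) (or to a triviality). Symmetrically if $[\alpha,l]$ is non-essential. The key combinatorial observation is that essentiality of $[\alpha,m]$ depends only on the position of the two strands $\alpha^{-1}\{m,m+1\}$ relative to the strands to their right, so multiplying $\alpha$ on the right by $\tau_k$ with $k+1<l$ does not change whether $[\alpha,l]$ is essential, and vice versa. One then checks case by case that in each degenerate situation (iii) either becomes trivial or reduces to one of (iii$_1$), (iii$_2$) after applying (i); and conversely that (iii$_1$),(iii$_2$) in these degenerate cases are consequences of (i) together with the surviving relations.

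The main obstacle is bookkeeping: one must be careful that the rewriting via (iii$_2$) used to pass between (iii) and (iii$_1$) is itself a relation of the system, and that no circularity arises (we are allowed to use Lemma~\ref{ithree}, but at the level of the presentation it is (iii$_2$) that is primitive). I expect the cleanest formulation to be: the presentation of Theorem~\ref{PnO} and the presentation with (i), (ii), (iii$_1$), (iii$_2$) define the same group because the normal closures of the two relator sets coincide — each relator of one kind lies in the normal closure of the relators of the other kind — and this is verified by the elementary substitutions above. No deep input beyond Lemma~\ref{ithree} and the definition of essentiality is needed.
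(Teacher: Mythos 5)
Your proposal is correct and is essentially the paper's (implicit) argument: the corollary follows at once from Lemma~\ref{ithree}, which yields every instance of (iii$_2$) as a consequence of (i) and (iii), after which (iii) and (iii$_1$) are interderivable via the single substitution $[\alpha\tau_l,k]=[\alpha,k]$, uniformly in $\alpha$, $k$, $l$. Two small corrections are worth recording. First, your case analysis on essentiality is superfluous --- the substitution argument never looks at which generators are essential, since (i) is present in both presentations --- and the claim that in a degenerate case (iii) ``collapses to an instance of (iii$_2$)'' is off: $[\alpha,l]$ and $[\alpha\tau_k,l]$ differ by a transposition inside $I_1$ of the level-$l$ generator, whereas (iii$_2$) only permits transpositions inside $I_3$; fortunately nothing rests on this. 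Second, (iii$_2$) is \emph{not} ``obtained by rewriting a single instance of (iii)'': a single instance of (iii) gives $[\alpha,k]=[\alpha\tau_l,k]$ only when the two level-$l$ generators involved are non-essential, and the general case requires the spanning-tree chain of such instances, i.e.\ the full proof of Lemma~\ref{ithree}. So the nontrivial direction is deriving (iii$_1$) and (iii$_2$) from (i)--(iii), not the reverse implication as you state; your write-up does ultimately invoke Lemma~\ref{ithree} there, so the argument goes through.
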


This corollary can be used to define an algorithm that rewrites an arbitrary generator $[\alpha, k]$ as a word in the decreasing generators. 

\subsection{The rewriting algorithm}\label{algo}

Suppose that for each $i<k$ and for each $\alpha$ there is a word $w(\alpha, i)$ in the decreasing generators such that  
$$[\alpha, i] = w(\alpha, i)$$
as elements of $\PP_n$. Consider a generator $[\alpha, k]$ such that  $\alpha$ is decreasing on $I_2^\alpha$. By (iii$_2$), we can also assume that $\alpha$ is decreasing on $I_3^\alpha$.

Set $j=1$,  $\alpha_1=\alpha$ and $w_1=1$, the empty word in the decreasing generators.

\medskip

($\bullet$) If $[\alpha_j, k]$ is decreasing on $I_1^\alpha$, set $$w(\alpha, k)= w_j [\alpha_j, k] w_j^{-1}$$ and the algorithm terminates with the output $w(\alpha, k)$.

Otherwise, let $m<k-1$ be the greatest integer such that $\alpha_j^{-1}(m) < \alpha_j^{-1}(m+1)$. 
Then, we set  
$$w_{j+1}=w_j w(\alpha_j, m)$$
and
$$\alpha_{j+1}=\alpha_j \tau_{m},$$
increase the value of $j$ by 1 and go back to ($\bullet$).
\medskip 
 
We claim that the above algorithm eventually terminates and its output, the word $w(\alpha, k)$, is a word in decreasing generators that is equal to $[\alpha,k]$ as an element of $\PP_n$.

Indeed, for each $j$, we have
 $$\{\alpha_j^{-1}(1), \ldots, \alpha_j^{-1}(k-1)\} = \{\alpha^{-1}(1), \ldots, \alpha^{-1}(k-1)\}.$$
Passing from  $\alpha_j$ to  $\alpha_{j+1}$, we interchange the rightmost pair of numbers in the sequence $$(\alpha_j^{-1}(1), \ldots, \alpha_j^{-1}(k-1))$$ that fail to be in the decreasing order.  Such sequences for all $j$ may be considered as vertices of the permutohedron labelled with the permutations of  $I_1^{\alpha}$, and $\alpha_{j+1}$ is the parent vertex of $\alpha_j$ on the decreasing spanning tree. Moving from a vertex to its parent  we eventually arrive to the root of the tree, which is labelled with the decreasing permutation of $I_1^{\alpha}$. Each $\alpha_j$ is decreasing on $I_2^{\alpha}$ and $I_3^{\alpha}$; therefore, for the terminal value of $j$, the generator $[\alpha_j, k]$ is decreasing.

At each step, $w(\alpha_j, m)$ is a well-defined word in decreasing generators since $m<k$. Moreover, since $w(\alpha_j, m)=[\alpha_j,m]$ for $m<k$, 
(iii$_1$) implies that 
$$ [\alpha_j, k] = w(\alpha_j, m) [\alpha_{j+1}, k] w(\alpha_j, m)^{-1}$$
as elements of $\PP_n$ for all $j$. It follows that for all $j$ and, in particular, for the terminal value of $j$, we have
$$ [\alpha, k] = w_j [\alpha_{j}, k] w_j^{-1}.$$

In the algorithm above we have assumed that the generator $[\alpha, k]$ has the property that $\alpha$ decreases in $I_2^\alpha$. If $\alpha$ increases on 
$I_2^\alpha$, $\alpha\tau_k$ decreases on the same set. On the other hand, we have, by (ii), that $[\alpha\tau_k, k] =[\alpha, k]^{-1}$ so in order to produce the word in decreasing generators for $[\alpha, k]$ we take the inverse of the output of the algorithm for $[\alpha\tau_k, k]$.

We have proved

\begin{lem}\label{conjugation} For any generator $[\alpha, k]$ there exists a word $w$ in decreasing generators $[\beta, m]$ with $m<k-1$ such that
$$[\alpha, k] = w^{-1}[\alpha', k] w,$$
where $[\alpha', k]$ is a decreasing generator with the same crossing type as $[\alpha, k]$.
\end{lem}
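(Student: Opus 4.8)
The plan is to prove the statement by induction on $k$, the induction being a verification that the rewriting algorithm of Section~\ref{algo} does what is claimed; as inductive hypothesis I take the assertion of the lemma for all indices $i<k$ in place of $k$, which in particular implies that every $[\alpha,i]$ equals a word in decreasing generators of index at most $i$. For $i\le 2$ (the base of the induction) the set $I_1^\alpha$ has at most one element, so every $[\alpha,i]$ is automatically decreasing on $I_1^\alpha$; after using relation~(ii) to make $\alpha$ decrease on $I_2^\alpha$ and Lemma~\ref{ithree} to make it decrease on $I_3^\alpha$, one may take $w$ empty.

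For the inductive step, fix $[\alpha,k]$ with $k\ge 3$. Relation~(ii) reduces us to the case in which $\alpha$ decreases on $I_2^\alpha$: otherwise replace $[\alpha,k]$ by $[\alpha\tau_k,k]=[\alpha,k]^{-1}$, which has the same crossing type, and invert the final answer. By Lemma~\ref{ithree} one may also assume $\alpha$ decreases on $I_3^\alpha$, without changing the element of $\PP_n$. Now iterate: put $\alpha_1=\alpha$, $w_1$ empty, and while $\alpha_j$ fails to be decreasing on $I_1^\alpha$, let $m$ be the largest integer with $\alpha_j^{-1}(m)<\alpha_j^{-1}(m+1)$. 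Since this inversion sits among the first $k-1$ entries of $(\alpha_j^{-1}(1),\dots,\alpha_j^{-1}(n))$, we have $m<k-1$, so the inductive hypothesis gives a word $w(\alpha_j,m)$ in decreasing generators of index $<k-1$ equal to $[\alpha_j,m]$. Relation~(iii$_1$), applicable precisely because $m+1<k$, yields
\[
[\alpha_j,k]=w(\alpha_j,m)\,[\alpha_j\tau_m,k]\,w(\alpha_j,m)^{-1},
\]
and setting $\alpha_{j+1}=\alpha_j\tau_m$, $w_{j+1}=w_j\,w(\alpha_j,m)$ preserves the identity $[\alpha,k]=w_j\,[\alpha_j,k]\,w_j^{-1}$.

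To finish I would check termination. Because $m<k-1$, the transposition $\tau_m$ permutes only the first $k-1$ positions, so $I_1^{\alpha_j},I_2^{\alpha_j},I_3^{\alpha_j}$ remain those of $\alpha$ and $\alpha_j$ stays decreasing on $I_2^\alpha$ and $I_3^\alpha$; meanwhile the word $(\alpha_j^{-1}(1),\dots,\alpha_j^{-1}(k-1))$ passes to its parent on the descending spanning tree of the permutohedron whose vertices are the permutations of $I_1^\alpha$, the step exchanging the rightmost out-of-order adjacent pair. Root-ward paths in a finite rooted tree are finite, so after finitely many steps $\alpha_j$ becomes decreasing on $I_1^\alpha$ too, and then $[\alpha_j,k]$ is a decreasing generator with the same crossing type as $[\alpha,k]$. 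Taking $\alpha'=\alpha_j$ and $w=w_j^{-1}$ for this terminal $j$, and noting that $w_j$ is a product of the words $w(\alpha_i,m_i)$, each of which lies in decreasing generators of index $<k-1$, yields $[\alpha,k]=w^{-1}[\alpha',k]w$ of the required form (with $[\alpha',k]$ read as its inverse, of the same crossing type, in the case where $\alpha$ increased on $I_2^\alpha$).

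The only genuine difficulty is the bookkeeping surrounding the inequality $m<k-1$: it is simultaneously what makes the inductive hypothesis applicable, what makes relation~(iii$_1$) available, what keeps $I_2^\alpha$ and $I_3^\alpha$ undisturbed, and what produces the index bound in the conclusion. Establishing it, and with it the identification of $\alpha_1,\alpha_2,\dots$ with a root-ward path in the descending spanning tree of a lower-order permutohedron (hence the termination of the process), is the step that has to be done carefully.
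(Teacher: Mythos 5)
Your argument is essentially the paper's own proof: it is the rewriting algorithm of Section~\ref{algo}, organized as an induction on $k$, with termination supplied by identifying the iterates with a root-ward path in the descending spanning tree of the permutohedron on the permutations of $I_1^\alpha$. One point needs repair. You define $m$ as \emph{the largest integer} with $\alpha_j^{-1}(m)<\alpha_j^{-1}(m+1)$ and then assert $m<k-1$ because ``this inversion sits among the first $k-1$ entries''; that is false as stated, since $\alpha_j$ being decreasing on each of $I_1^\alpha$, $I_2^\alpha$, $I_3^\alpha$ separately says nothing about the comparisons at positions $k-1$ and $k+1$, which straddle two blocks. Concretely, for $n=6$, $k=4$ and $\alpha^{-1}=(1,3,2,6,5,4)$ the permutation is decreasing on $I_2^\alpha$ and $I_3^\alpha$ but not on $I_1^\alpha$, and the largest ascent is at $m=3=k-1$, where neither the inductive hypothesis nor relation (iii$_1$) is available. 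The fix is exactly what the paper builds into the definition: take $m$ to be the largest integer \emph{less than $k-1$} with $\alpha_j^{-1}(m)<\alpha_j^{-1}(m+1)$; such an $m$ exists precisely because $\alpha_j$ fails to be decreasing on $I_1^\alpha$, an adjacent inversion inside $I_1^\alpha$ automatically having $m+1\le k-1$. With that emendation everything else you wrote goes through, including the bound $m'<k-1$ on the indices of the generators occurring in $w$ and the treatment of the case where $\alpha$ increases on $I_2^\alpha$ (which the paper handles with the same inversion convention you use).
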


\begin{cor}
The essential decreasing generators $[\alpha, k]$ form a set of generators for $\PP_n$.
\end{cor}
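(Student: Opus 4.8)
The plan is to start from the presentation of Theorem~\ref{PnO}, whose generating set consists of all the $[\alpha,k]$, and to prune it in two stages. The first stage is immediate: relation~(i) asserts that every non-essential generator equals $1$ in $\PP_n$, so $\PP_n$ is already generated by the \emph{essential} generators $[\alpha,k]$. What remains is to rewrite each such generator as a word in the essential \emph{decreasing} generators.

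For this I would invoke Lemma~\ref{conjugation}: given an essential generator $[\alpha,k]$, there is a word $w$ in decreasing generators $[\beta,m]$ with $m<k-1$, together with a decreasing generator $[\alpha',k]$ having the same crossing type as $[\alpha,k]$, such that $[\alpha,k]=w^{-1}[\alpha',k]w$ in $\PP_n$. Since the property of being essential depends only on the underlying ordered partition, $[\alpha',k]$ is essential as well. The letters of $w$ are decreasing generators, and decreasing generators are in bijection with crossing types; each is therefore either essential or non-essential, and in the latter case equals $1$ in $\PP_n$ by relation~(i). Deleting the non-essential letters from $w$ produces a word $w'$ in essential decreasing generators with $[\alpha,k]=(w')^{-1}[\alpha',k]\,w'$ in $\PP_n$. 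Hence every essential generator lies in the subgroup generated by the essential decreasing generators.

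Putting the two stages together, every generator of the presentation of Theorem~\ref{PnO} lies in the subgroup generated by the essential decreasing generators, so this set generates $\PP_n$.

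I do not expect a genuine obstacle: the statement is a bookkeeping corollary of Lemma~\ref{conjugation} and relation~(i). The only point that needs a word of care is the passage from $w$ to $w'$, i.e.\ the fact that one may discard the non-essential decreasing letters of the conjugating word; this rests on decreasing generators being indexed by crossing types, so that a non-essential one contributes the identity.
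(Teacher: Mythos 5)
Your argument is correct and is exactly the route the paper intends: the corollary is stated there without a separate proof, as an immediate consequence of Lemma~\ref{conjugation} (rewriting any generator as a conjugate of a decreasing one by a word in decreasing generators) combined with relation~(i) of Theorem~\ref{PnO} (non-essential generators are trivial). Your extra remark about discarding non-essential letters of $w$ is sound but not even needed for generation, since those letters already represent the identity in $\PP_n$.
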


The statement of Lemma~\ref{conjugation} can be somewhat sharpened with the help of the following observation. Each step of the rewriting algorithm exchanges two adjacent numbers in the sequence $(\alpha^{-1}(1), \ldots, \alpha^{-1}(k-1))$. Note that if two numbers appear in this sequence in the decreasing order, they will never be exchanged. 
This implies 

\begin{lem}\label{w} 
In the notations of Lemma~\ref{conjugation} is can be assumed that for any pair of
numbers $p>q$ that appear in the decreasing order in the sequence $(\alpha^{-1}(1), \ldots, \alpha^{-1}(n))$, the word $w$ does not involve any generators whose crossing type $I$ satisfies 
either of the two conditions:
\begin{itemize}
\item $I_2=\{p,q\}$;
\item $p\in I_3$ and $q\in I_2$.
\end{itemize}
\end{lem}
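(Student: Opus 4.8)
The plan is to follow the rewriting algorithm of Section~\ref{algo} step by step and to keep track of which crossing types can occur among the decreasing generators that make up the word $w$. Everything rests on one invariant, which I will call $(\ast)$: every permutation $\gamma$ that occurs at any stage of the (recursive) computation --- whether as one of the $\alpha_j$ in the main loop of some sub-call, or as an intermediate permutation during the normalisations on $I_2$ or $I_3$ --- is obtained from $\alpha$ by a chain of adjacent transpositions, each of which takes a pair of values that is in ascending order into descending order. For the main loop this is precisely the observation quoted just before the Lemma, applied at every level of the recursion; the $I_2$-normalisation replaces $[\beta,m]$ by $[\beta\tau_m,m]^{-1}$ and so exchanges a single ascending adjacent pair; and the normalisation on $I_3$ may be performed by bubble sort, which again swaps only ascending adjacent pairs. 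Since transposing an adjacent pair alters the relative order of \emph{exactly} those two values, and does so from ascending to descending, such a move never turns a descending pair into an ascending one. Hence $(\ast)$: for every such $\gamma$, a pair occurring in the descending order in $(\alpha^{-1}(1),\ldots,\alpha^{-1}(n))$ still occurs in the descending order in $(\gamma^{-1}(1),\ldots,\gamma^{-1}(n))$; equivalently, a pair in ascending order in $\gamma^{-1}$ is in ascending order in $\alpha^{-1}$.

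I would then unfold the recursion: the output of each (sub-)call is a product of conjugates of the single decreasing generator it produces upon termination, so every decreasing generator appearing in $w$ is the terminating generator $[\gamma,\ell]$ of some sub-call $w(\delta,\ell)$. Now $\ell$ was chosen in the parent call as the greatest index with $\delta^{-1}(\ell)<\delta^{-1}(\ell+1)$, and inside the sub-call the values in positions $\ell,\ell+1$ are only reordered (by the $I_2$-normalisation) and never displaced (the normalisation on $I_3$ and the main loop fix those positions). Hence the $I_2$-block of the crossing type of $[\gamma,\ell]$ equals $\{\delta^{-1}(\ell),\delta^{-1}(\ell+1)\}$, a pair occurring in ascending order in $\delta^{-1}$; by $(\ast)$ it occurs in ascending order in $\alpha^{-1}$, and therefore cannot equal $\{p,q\}$ for a pair $p>q$ that occurs in the descending order in $\alpha^{-1}$. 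This disposes of the first forbidden configuration.

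For the second, suppose a decreasing generator $[\gamma,\ell]$ occurring in $w$ had a crossing type with $p\in I_3$ and $q\in I_2$, where $p>q$. Then in $\gamma^{-1}$ the value $q$ lies in position $\ell$ or $\ell+1$ and $p$ lies in a position $\geq\ell+2$, so $q$ occurs to the left of $p$; as $p>q$, the pair $\{p,q\}$ occurs in ascending order in $\gamma^{-1}$. By $(\ast)$ it occurs in ascending order in $\alpha^{-1}$ as well, so $p$ and $q$ do not occur in the descending order in $\alpha^{-1}$, contrary to hypothesis. This rules out the second configuration, and the Lemma follows.

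The only delicate point is $(\ast)$. One has to verify that the two normalisation steps --- on $I_2$ and on $I_3$ --- which lie outside the ``main loop'' addressed by the observation before the Lemma, also only ever swap ascending adjacent pairs, and one must keep clearly in mind that ``$[\gamma,\ell]$ is a decreasing generator'' refers to the order that $\gamma$ induces on its blocks, while the conclusion is phrased in terms of the order induced by $\alpha$. These two orders agree on $I_2$ only up to reversal, which is exactly the reason why an $I_2$-block that is decreasing for $\gamma$ corresponds to an \emph{ascending} pair for $\alpha$. Granted $(\ast)$, both cases are then immediate.
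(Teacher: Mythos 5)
Your argument is correct and takes essentially the same route as the paper, which deduces the lemma from the single observation that each step of the rewriting algorithm only ever exchanges an adjacent pair of values standing in ascending order, so that descending pairs stay descending; your invariant $(\ast)$ is precisely this observation, propagated through the recursion and the $I_2$- and $I_3$-normalisations. Your write-up is in fact more complete than the paper's, which states the observation and leaves the derivation of the two bullet points (and the treatment of the normalisation steps) to the reader.
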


\subsection{Relations} 
A relation of type (iii) is specified by a triple $[\alpha, k, l]$ with $k+1< l<n$, or, what is the same, by a double crossing type 
$$(J_1^\alpha,\ldots, J_5^\alpha) =(\alpha^{-1}\{1,\ldots,k-1\}, \alpha^{-1}\{k, k+1\}, \alpha^{-1}\{k+2,\ldots, l-1\}, \alpha^{-1}\{l, l+1\},
\alpha^{-1}\{l+2,\ldots, n\}),$$ together with an order on each $J_r^\alpha$.

Let us call a triple $[\alpha, k, l]$ \emph{decreasing} if $\alpha$ is decreasing on $J_r^\alpha$ for each $r$.

\begin{lem} 
In the presentation given by Theorem~\ref{PnO} and Corollary~\ref{shtrix}, the set of all 
relations of type (iii$_1$) can be replaced by the subset indexed by the decreasing triples only.
\end{lem}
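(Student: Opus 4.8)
The plan is to pass from an arbitrary relation of type (iii$_1$), attached to a triple $[\alpha,k,l]$ and written $r_{[\alpha,k,l]}$ below, to the one attached to the decreasing triple with the same double crossing type, using only (i), (ii), (iii$_2$) together with (iii$_1$)-relations attached to triples that are \emph{smaller} in a suitable sense. The first step is normalization. Since Lemma~\ref{ithree} is a consequence of (i) and (iii$_2$) alone, the three generators occurring in $r_{[\alpha,k,l]}$ depend, modulo (i), (ii), (iii$_2$), only on the double crossing type $J=(J_1,\dots,J_5)$ and on the orders $\alpha$ induces on the four blocks $J_1,J_2,J_3,J_4$: the generator $[\alpha,k]$ has crossing type $J_{[1]}$ and only its orders on $J_1$ and $J_2$ matter; the generators $[\alpha,l]$ and $[\alpha\tau_k,l]$ have crossing type $J_{[2]}$, and their order on $J_1\sqcup J_2\sqcup J_3$ is the concatenation of the orders on $J_1,J_2,J_3$ (with $J_2$ reversed in the case of $[\alpha\tau_k,l]$), while the order on $J_5$ is irrelevant because $J_5$ lies inside the common $I_3$ of all three generators. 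Next, as $J_2$ and $J_4$ have two elements, replacing $\alpha$ by $\alpha\tau_k$, resp.\ $\alpha\tau_l$, reverses the order on $J_2$, resp.\ $J_4$; a one-line computation with (ii) — and, for $J_4$, also (iii$_2$) — shows that this leaves $r_{[\alpha,k,l]}$ unchanged, up to replacing it by its inverse in the case of $J_4$. So we may assume $\alpha$ is decreasing on $J_2$ and on $J_4$, and everything has been reduced to the orders on $J_1$ and $J_3$.

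The heart of the argument is a single \emph{sorting move}. Let $\alpha_0$ be obtained from $\alpha$ by reordering $J_1$ to be decreasing, i.e.\ $\alpha_0=\alpha\tau_{m_1}\tau_{m_2}\cdots$, where each $m_i<k-1$ and the $\tau_{m_i}$ bubble-sort the restriction of $\alpha$ to $J_1$. Applying (iii$_1$) to the triples $[\alpha\tau_{m_1}\cdots\tau_{m_{i-1}},m_i,k]$ and $[\alpha\tau_{m_1}\cdots\tau_{m_{i-1}},m_i,l]$ — all with first index $m_i<k-1$ — together with Lemma~\ref{ithree} (which gives $[\gamma\tau_k,m_i]=[\gamma,m_i]$ when $m_i\le k-2$), one checks that
$$[\alpha_0,k]=W^{-1}[\alpha,k]W,\qquad [\alpha_0,l]=W^{-1}[\alpha,l]W,\qquad [\alpha_0\tau_k,l]=W^{-1}[\alpha\tau_k,l]W$$
for \emph{one and the same} word $W=[\alpha,m_1]\,[\alpha\tau_{m_1},m_2]\cdots$, so that $r_{[\alpha_0,k,l]}$ and $r_{[\alpha,k,l]}$ are equivalent modulo (i), (ii), (iii$_2$) and the listed (iii$_1$)-relations. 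A parallel move replaces $\alpha_0$ by the permutation $\alpha_{00}$ which is additionally decreasing on $J_3$: one sorts $J_3$ by transpositions $\tau_{p_i}$ with $k+2\le p_i\le l-2$, notes that (iii$_2$) keeps the $[\,\cdot\,,k]$-generator fixed, and uses (iii$_1$) for $[\,\cdot\,,p_i,l]$ and for $[\,\cdot\,,k,p_i]$ to conjugate through; the triple $[\alpha_{00},k,l]$ is the decreasing one. It therefore remains to show that the auxiliary relations invoked during these two moves are themselves consequences of the decreasing (iii$_1$)-relations together with (i), (ii), (iii$_2$).

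This I would arrange by an induction whose outermost variable is the second index $l$. The auxiliaries $[\,\cdot\,,m_i,k]$ and $[\,\cdot\,,k,p_i]$ above have second index $<l$ and so are covered by the outer induction. The remaining ones — $[\,\cdot\,,m_i,l]$ with $m_i<k$ from the $J_1$-move, and $[\,\cdot\,,p_i,l]$ with $p_i>k$ from the $J_3$-move — have second index $l$ and must be placed strictly below $[\alpha,k,l]$ in a secondary well-founded order. For fixed $l$ one uses the product of the descending spanning trees of the permutohedra on $J_1$ and on $J_3$ (exactly as in Lemma~\ref{ithree}) to order the triples with double crossing type $J$ so that each sorting move decreases complexity, and one checks that the auxiliary triples are lower in this order — note that the $J_1$-block of $[\,\cdot\,,m_i,l]$ is the proper subinterval $\{1,\dots,m_i-1\}$ of $J_1$, and the $J_3$-block of $[\,\cdot\,,p_i,l]$ is the proper subinterval $\{p_i+2,\dots,l-1\}$ of $J_3$.

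The step I expect to be the main obstacle is precisely this last one: closing the induction. A $J_1$-sorting move of $[\alpha,k,l]$ forces one to have already handled relations $[\,\cdot\,,m_i,l]$ whose ``middle block'' is strictly \emph{larger} than $J_3$ (as $m_i<k$), whereas a $J_3$-sorting move forces one to have handled relations $[\,\cdot\,,p_i,l]$ whose middle block is strictly \emph{smaller} (as $p_i>k$); neither the first index $k$ nor $|J_3|$ is monotone along the reduction, so no single monotone quantity suffices. Making the two sortings coexist requires a carefully staged, two-tiered well-founded order on the triples of a fixed second index — for instance performing the $J_3$-sorting and the $J_1$-sorting in a prescribed phase order and bounding the length of each resulting chain of recursive calls (e.g.\ by $l-2$) — and verifying that every auxiliary relation genuinely sits strictly below $[\alpha,k,l]$ in that order is where the real work lies.
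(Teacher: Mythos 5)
Your normalization on $J_2$, $J_4$, $J_5$ via (ii) and (iii$_2$), and your two sorting moves with their lists of auxiliary relations, coincide with what the paper does (and the computations behind them are correct). But the proposal stops exactly at the decisive point: you observe that the $J_1$-sort calls on triples $[\,\cdot\,,m,l]$ with $m<k-1$, whose middle block has size $l-m-2>l-k-2$, while the $J_3$-sort calls on triples with strictly smaller middle block, conclude that no single monotone quantity orders the recursion, and leave the induction unclosed. As written, this is a genuine gap, not a routine verification: a derivation of $r_{[\alpha,k,l]}$ from the decreasing relations must terminate, and you have not exhibited any termination argument.

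The missing idea is that one should not look for a global well-founded order on triples at all. The paper inducts on $i=|J_3^\alpha|$ and, at stage $i$, performs a Tietze transformation replacing only the relations in $R_i$ (those with $|J_3^\alpha|\le i$) by their decreasing representatives \emph{inside the full presentation}; the relations with $|J_3^\alpha|>i$ are not touched at that stage and therefore remain available verbatim as relations of the presentation. This is precisely what the $J_1$-sort needs: its problematic auxiliaries $r_{[\alpha,m,l]}$ and $r_{[\alpha\tau_k,m,l]}$ with $m<k-1$ all have $|J_3|>i$, so at stage $i$ they need not be ``already reduced'' --- they are simply still there, and get replaced only at a later stage. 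The $J_3$-sort auxiliaries $r_{[\alpha,m,l]}$, $r_{[\alpha\tau_k,m,l]}$, $r_{[\alpha,k,m]}$ with $k+1<m<l-1$ all lie in $R_{i-1}$ and are handled by the earlier stages; the one leftover, the auxiliary $r_{[\,\cdot\,,m,k]}$ from the $J_1$-sort whose middle block happens to have size exactly $i$, has second index $k<l$ and is disposed of by a secondary induction on the second index within the stage. So the two sortings never compete in the same well-founded order: one is absorbed by the staging (larger $|J_3|$ means ``not yet replaced''), the other by the induction hypothesis (smaller $|J_3|$ means ``already replaced''). The paper compresses all of this into the phrase that transposing inside $J_1$ ``results in the conjugation of the corresponding relation by $[\alpha,m]$,'' which is accurate only once one realizes that the relations needed to effect that conjugation are the still-present higher-$|J_3|$ ones; supplying that observation is what your write-up is missing, and with it your argument closes.
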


\begin{proof} 

Let $R_i$ be the set of relations (iii$_1$) with $|J_3^\alpha|\leq i$ together with all the relations (iii$_2$). We will use induction on $i$ to show that $R_i$ can be replaced by the set of the relations from $R_i$ indexed by decreasing triples only. This will imply the statement of the Lemma since $R_{i+1}=R_i$ for $i\geq n-4$.

The triples $[\alpha,k,l]$ and $[\alpha\tau_m,k,l]$ where $\{m,m+1\}\subseteq J_r^\alpha$, $r\in \{1,2,4,5\}$, produce equivalent relations modulo (ii) and (iii$_2$). 
This is immediately clear for $J_2^\alpha$, $J_4^\alpha$ and $J_5^\alpha$. As for $J_1^\alpha$, transposing two adjacent elements in positions $m$ and $m+1$ results in the conjugation of the corresponding relation by $[\alpha,m]$.
This settles the case $i=1$. 

Now, consider the case of arbitrary $i$. By the previous argument, we can assume that $\alpha$ is decreasing on $J_r^\alpha$ for $r\neq 3$.  Take $m$ such that $k+1 < m < l-1$. 
The relations corresponding to $[\alpha,k,l]$ and $[\alpha\tau_m,k,l]$
are
\begin{equation}\label{eone}
[\alpha \tau_{k}, l] = [\alpha, k]^{-1}[\alpha, l] [\alpha, k]
\end{equation}
and
\begin{equation}\label{etwo}
[\alpha\tau_{m}\tau_{k}, l] = [\alpha\tau_{m}, k]^{-1}[\alpha\tau_{m}, l] [\alpha\tau_{m}, k],
\end{equation}
respectively.
The relation 
$$ [\alpha \tau_{m}, l] = [\alpha, m]^{-1}[\alpha, l] [\alpha, m]$$
belongs to $R_{i-1}$ so
$$ [\alpha \tau_{m}\tau_{k}, l] = ([\alpha, m] [\alpha\tau_{m}, k])^{-1}\cdot[\alpha, l]  \cdot [\alpha, m] [\alpha\tau_{m}, k]$$
is equivalent to (\ref{etwo}) modulo $R_{i-1}$.
Similarly, 
$$ [\alpha\tau_{k} \tau_{m}, l] = [\alpha\tau_{k}, m]^{-1}[\alpha\tau_{k}, l] [\alpha\tau_{k}, m]$$
lies in $R_{i-1}$ so, modulo $R_{i-1}$, we have that 
$$[\alpha\tau_{k}, l] = [\alpha\tau_{k}, m][\alpha\tau_{k} \tau_{m}, l] [\alpha\tau_{k}, m]^{-1} = 
[\alpha\tau_{k}, m]
[\alpha\tau_{m}, k]^{-1}
[\alpha, m]^{-1}\cdot
[\alpha, l]  \cdot 
[\alpha, m] [\alpha\tau_{m}, k]
[\alpha\tau_{k}, m]^{-1}$$
is equivalent to  (\ref{etwo}). On the other hand, $$[\alpha, m] [\alpha\tau_{m}, k]
[\alpha\tau_{k}, m]^{-1}=[\alpha, k],$$ since $[\alpha\tau_{m}, k]=[\alpha, k]$ by (iii$_2$) and this relation belongs to $R_{i-1}$. Therefore, modulo $R_{i-1}$,  (\ref{eone}) is equivalent to  (\ref{etwo}).
\end{proof}

\begin{proof}[Proof of Theorem~\ref{presentation}]
The rewriting algorithm gives a unique expression for any essential generator $[\alpha,k]$ in terms of the decreasing essential generators.
In order to obtain a presentation  for $\PP_n$ in terms of  these generators, one should substitute these expressions into the relations (iii) 
coming from the decreasing triples $[\alpha, k, l]$.

Such a triple gives the relation 
$$[\alpha, k][\alpha\tau_{k}, l] = [\alpha, l] [\alpha, k].$$
Let $[\alpha', k]$ and $[\alpha'', l]$ be the decreasing generators with the same crossing types as $[\alpha, k]$ and $[\alpha, l]$, respectively. Then, we have
$$[\alpha, k]=[\alpha', k]$$
in $\PP_n$ by (iii$_2$), and Lemma~\ref{conjugation} gives 
$$[\alpha, l]=u^{-1}[\alpha'',l]u$$ and $$[\alpha\tau_{k}, l]=v^{-1}[\alpha'',l]v,$$ where $u,v\in\PP_n$ are words in decreasing generators obtained with the help of the descending tree. Therefore,
\begin{equation}\label{finrel}
(u[\alpha', k] v^{-1})\cdot[\alpha'',l] = [\alpha'',l]\cdot (u [\alpha', k]v^{-1}).
\end{equation}

If the double crossing type of $[\alpha, k,l]$ is non-essential, (\ref{finrel}) holds trivially. Indeed, if $[\alpha, k,l]$ is non-essential, either there exists $y\in J_4$ such that $y>x$ for all $x\in J_5$, or there exists $y\in J_2$ such that $y>x$ for all $x\in J_3$. In the first case, the generator $[\alpha'',l]$ is non-essential, and, hence, trivial. In the second case, it follows from the rewriting algorithm (or, in other words, from the definition of the descending spanning tree) that $u [\alpha', k]=v$ as reduced words in the decreasing generators.

Now, if the double crossing type $J$ of a decreasing triple $[\alpha, k,l]$ is essential, take $$u_J=u, \quad \text{and}\quad v_J=v,$$  where $J$ is the double crossing type of $[\alpha, k,l]$.
 Then, (\ref{finrel}) produces the relations described in Theorem~\ref{presentation}. 

Let us now see that neither $u$ nor $v$ contain the generator $[\alpha', k]$. In Lemma~\ref{w}, take $$p=\alpha^{-1}(k),\quad  q=\alpha^{-1}(k+1),\quad [\beta,m]=[\alpha, l], \quad [\beta',m]=[\alpha'', l].$$ Since
$[\alpha,k,l]$ is decreasing, $p>q$ and, by the first part of Lemma~\ref{w}, $u$ cannot contain $[\alpha', k]$. 

The word $v$ cannot contain $[\alpha', k]$ either. We have, by construction, $$v= v'\cdot [\alpha\tau_{k}, k+1]^{-1}.$$ Indeed, $\alpha^{-1}$ (and, hence, $(\alpha\tau_{k})^{-1}$) is decreasing on the interval $k+1< x < l$ and, since $[\alpha,k,l]$ is essential, $$(\alpha\tau_{k})^{-1}(k+1)<(\alpha\tau_{k})^{-1}(k+2).$$ Now, apply the second part of Lemma~\ref{w} with: 
 \smallskip
 
 \begin{itemize}
 \item $p=(\alpha\tau_{k})^{-1}(k+2)$ and $q=(\alpha\tau_{k})^{-1}(k+1)$;
 \item $[\beta,m]=[\alpha\tau_{k}\tau_{k+1}, l]$
 \item $[\beta', m]=[\alpha'', l]$.
 \end{itemize}
 \smallskip
  
\noindent Then we have $$[\beta,m]=v'^{-1}\cdot [\beta', m]\cdot v'$$ and, therefore, $v'$ cannot contain $[\alpha', k]$, since for $[\alpha', k]$ we have $(\alpha\tau_{k})^{-1}(k+2)\in I_3$ and $(\alpha\tau_{k})^{-1}(k+1)\in I_2$.

\end{proof}

\subsection{The groups $\PP_n$ with $n\leq 6$.} There are no essential double crossing types on fewer than 6 strands and, therefore, $\PP_n$ with $n\leq 5$ are free. It is not hard to enumerate the essential crossing types for 3, 4 and 5 strands; there are 1, 7 and 31 of them, respectively, as expected. 

For $n=6$, there are 111 essential crossing types and  20 essential double crossing types. Each essential double crossing type $J$ is determined uniquely by $J_{[2]}$. Indeed, if $J$ is essential, $J_1=\emptyset$ and $|J_3|=|J_5|=1$ and the only element of $J_3$ is greater than any of the two elements of $J_2$. Note that, in contrast, $J_{[1]}$ does not determine $J$.

Let us denote an essential crossing type $I=(I_1, I_2, I_3)$ by  listing first the elements of $I_1$ and then, in angular brackets, the elements of $I_2$, in some order. Since the number of strands is fixed, omitting $I_3$ creates no ambiguity. For an essential double crossing type $J$ on 6 strands, assume that $J_2=\{a_1, a_2\}$ and $J_3=\{a_3\}$ (so that, for instance, $J_{[1]}=\langle a_2 a_1\rangle$).
The word $u_J J_{[1]} v_J^{-1}$, by the construction of the previous subsection, is equal to 
$$\langle a_3 a_2\rangle\cdot a_2\langle a_3 a_1\rangle\cdot \langle a_2 a_1\rangle\cdot a_1\langle a_3 a_2\rangle^{-1}\cdot \langle a_3 a_1\rangle^{-1}\cdot a_3\langle a_2 a_1\rangle^{-1}.$$
This word can be chosen as a new generator instead of one of the crossing types: if $a_3\neq 6$, we replace the generator $a_3\langle a_2 a_1\rangle$ with it; otherwise, we replace $\langle a_2 a_1\rangle$.
Then, the only relations in $\PP_6$ are 20 commutators of distinct pairs of generators and we see that $$\PP_6 = F_{71}*(F_2^{\mathrm{ab}})^{*20}.$$

For $n>6$, the task of simplifying the presentation in terms of crossing types is less straightforward. In particular, it is not clear whether  $\PP_7$ is a right-angled Artin group.

\begin{rem}\label{explicit} As we note in the Section~\ref{ba}, our presentation is directly related to a certain known set of multiplicative generators in cohomology. However, the actual planar braids that correspond to the crossing types are rather complicated. 

In order to represent  a crossing type $I$ with a  planar braid, one can consider the decreasing essential generator $[\alpha, k]$ corresponding to $I$ as a generator of the planar braid groupoid $\PBgr_n$ and produce a pure planar braid with the help of the descending spanning tree as explained in Section~\ref{rs}. In other words, if we consider $[\alpha, k]$ as a braid with labelled strands, the planar pure braid it produces is $$t(\alpha)\cdot [\alpha, k] \cdot t(\alpha\tau_k)^{-1},$$ 
where $t(\beta)$ is the braid without essential crossings that connects the identity permutation to the permutation $\beta$.
\begin{figure}[ht]
$$\includegraphics[height=1.8in]{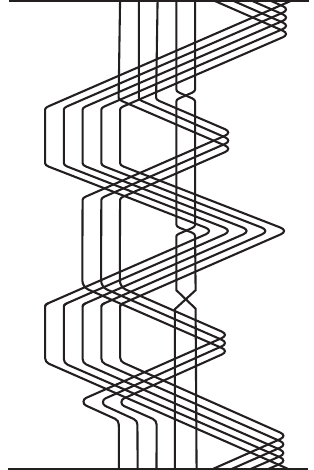}$$
\caption{The planar pure braid that corresponds to the crossing type $(\{1,2,3\},\{4,5\},\{6,7,8,9,10\})$.}
\label{genbr}

\end{figure}

\end{rem}
\subsection{Relation to Baryshnikov's generators in cohomology}\label{ba}
Each crossing type on $n$ strands defines a homomorphism $c_I: \PP_n\to \mathbb{Z}$ by assigning to a planar pure braid the number of its crossings of type $I$, taken with appropriate signs. It was shown by Baryshnikov \cite{B} that the homomorphisms defined by the essential crossing types generate $H^{*}(\PP_n, \mathbb{Z})$ as an algebra. 

The braid $b_I$ that represents an essential crossing type $I$ as in Remark~\ref{explicit} has precisely one essential crossing and the type of this crossing is $I$. In this sense, our generators are dual to Baryshnikov's generators of the cohomology since
$$c_I (b_{I'}) = \delta_{I,I'}.$$

\section{A presentation for the pure cactus group}.

Recall that the \emph{cactus group} $J_n$  has a presentation with the generators $s_{p,q}$, where $1\leq p< q\leq n$, and the following relations:
\begin{equation}\label{eqcac}
\begin{array}{rcll}
s_{p,q}^2&=& 1,&\\
s_{p,q}s_{m,r}&=& s_{m,r}s_{p,q} &\quad \text{if\ } [p,q]\cap [m,r] =\emptyset,\\
s_{p,q}s_{m,r}&=& s_{p+q-r, p+q-m}s_{p,q} &\quad \text{if\ }  [m,r] \subset [p,q].
\end{array}
\end{equation}
There is a homomorphism $J_n\to \Sigma_n$ to the symmetric group: it sends $s_{p,q}$ into the permutation $\tau_{p,q}$ of $\{1, \ldots, n\}$ which reverses the order of $p, p+1,\ldots, q$ and leaves the rest of the elements unchanged. The \emph{pure cactus group} $\Gamma_{n+1}$ is the kernel of this homomorphism. The groups $\Gamma_{i}$ are trivial for $i<4$, $\Gamma_{4}$ is an infinite cyclic group and $\Gamma_{5}$ is known to be the fundamental group of a connected sum of 5 real projective planes.

Elements of  $J_n$ can be depicted as planar braids of a more general kind, whose strands may have $k$-fold intersections for any $k\leq n$, see \cite{M}. The generators and relations for $J_n$ in terms of such pictures are shown in Figure~\ref{cactgen}. The planar braid group $\PB_n$ naturally sits inside $J_n$ as the subgroup of braids which have double intersections only \cite{BCL}.

\begin{figure}[ht]\label{cactgen}
$$\includegraphics[width=5.5in]{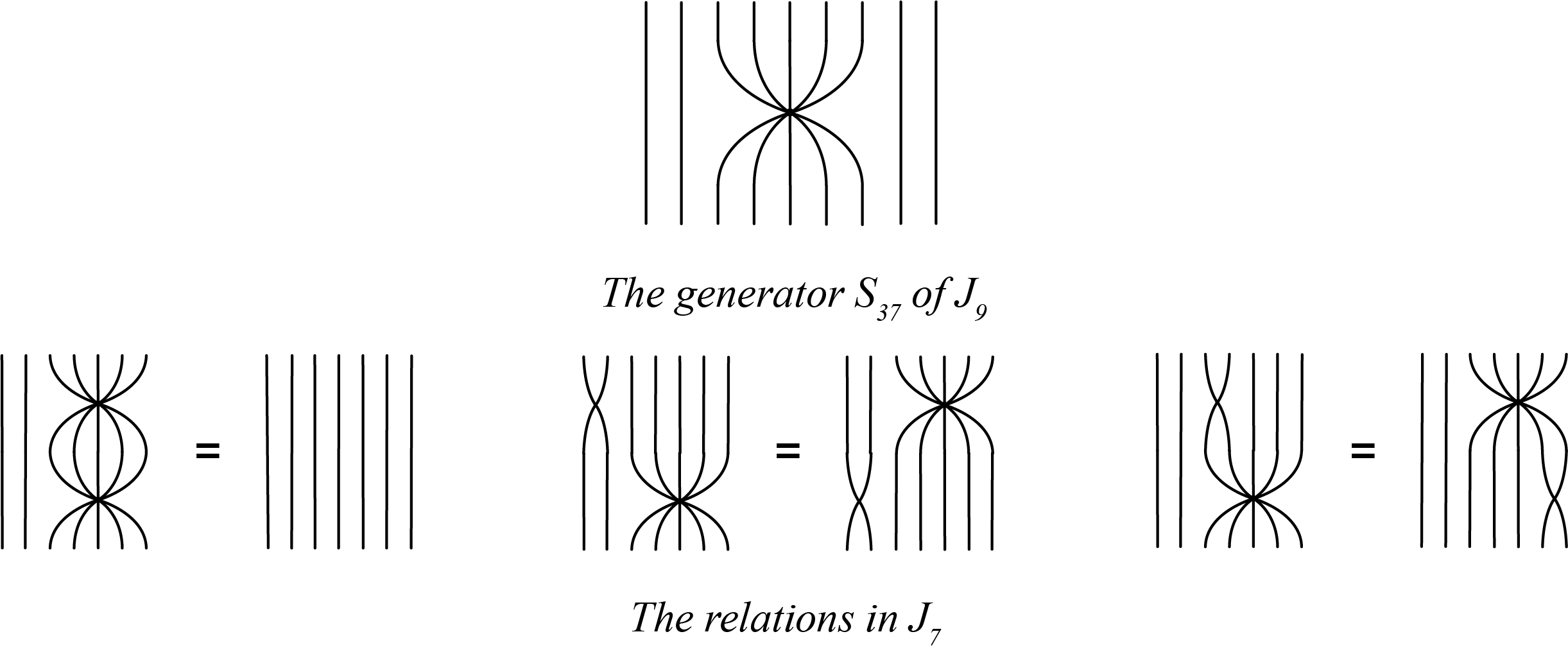}$$
\caption{Examples of generators and relations in cactus groups}
\end{figure}

The \emph{cactus groupoid} $\Jgr_n$ has the permutations of the set $\{1,\ldots, n\}$ as objects and $\mathrm{Hom}(\alpha, \beta)$ 
can be identified with the set of all $g\in J_n$ such that $\beta=\alpha\tau$ where $\tau$ is the permutation defined by $g$. This is exactly the same construction as that of the planar braid groupoid with the planar braids replaced by planar braids with higher-order intersections. It is generated by all pairs $[\alpha, p,q]\in \mathrm{Hom}(\alpha, \alpha\tau_{p,q})$, where $1\leq p<q\leq n$, subject to the relations
\begin{itemize} 
\item $ [\alpha,p,q] [\alpha \tau_{p,q}, p,q] =1 $;
\item $[\alpha,p,q] [\alpha \tau_{p,q}, m, r] =  [\alpha,m,r] [\alpha \tau_{m,r}, p, q]     \quad \text{if\ } [p,q]\cap [m,r] =\emptyset,$;
\item $[\alpha,p,q] [\alpha \tau_{p,q}, m, r]  = [\alpha, p+q-r, p+q-m]  [\alpha \tau_{p+q-r, p+q-m}, p,q] \quad \text{if\ }  [m,r] \subset [p,q].$
\end{itemize}

The generators $[\alpha,p,p+1]$ are precisely the generators of the groupoid $\PBgr_n$ and we can take the same spanning tree for the groupoid $\Jgr_n$  as for $\PBgr_n$. In view of the discussion in Section~\ref{RSp}, in order to obtain a presentation for the pure cactus group $\Gamma_{n+1}$, we only need to add to the above presentation of $\PBgr_n$ the relations 
$$[\alpha, p,p+1]=1, \quad\text{where\ } [\alpha,p] \text{\ is non-essential}.$$

This presentation is very far from being minimal, of course. For instance, while $\Gamma_4$ is infinite cyclic, our construction produces 7 generators. In general, the number of generators of $\Gamma_{n+1}$ is bounded below by $2^n - n(n + 1)/2 - 1$, the number of generators of $H_1(\Gamma_{n+1},\Z/2\Z)$ (see \cite{M}). The number of generators in our presentation, in contrast, has the order of  $(n+2)!/2$.

\end{document}